\sloppy\pagestyle{plain}
\theoremstyle{definition}
\newtheorem{example}[equation]{Example}
\newtheorem*{example*}{Example}
\newtheorem{theorem}[equation]{Theorem}
\newtheorem{proposition}[equation]{Proposition}
\newtheorem{conjecture}[equation]{Conjecture}
\newtheorem*{conjecture*}{Conjecture}
\newtheorem*{maintheorem*}{Main Theorem}
\newtheorem*{corollary*}{Corollary}
\newtheorem*{question*}{Question}
\newtheorem*{problem*}{Problem}
\newtheorem*{theorem*}{Theorem}
\theoremstyle{remark}
\newtheorem{remark}[equation]{Remark}
\newtheorem*{remark*}{Remark}
\makeatletter\@addtoreset{equation}{section} \makeatother
\author{Ivan Cheltsov and Victor Przyjalkowski}
\title{Fibers over infinity of Landau--Ginzburg models}
\thanks{Ivan Cheltsov was supported by the EPSRC Grant Number EP/V054597/1. The work of Victor Przyjalkowski was performed at the Steklov International Mathematical Center and supported by the Ministry of Science and Higher Education of the Russian Federation (agreement no. 075-15-2022-265).  He is a Young Russian Mathematics award winner and would like to thank its sponsors and jury.}
\address{Steklov Mathematical Institute of Russian Academy of Sciences, Moscow, Russia, \\
8 Gubkina street, Moscow 119991, Russia.}
\address{\emph{Ivan Cheltsov}
\newline
\textnormal{School of Mathematics, The University of Edinburgh,  Edinburgh, UK, \\
Edinburgh EH9 3JZ, UK.}
\newline
\textnormal{\texttt{I.Cheltsov@ed.ac.uk}}}
\address{\emph{Victor Przyjalkowski}
\newline
\textnormal{Steklov Mathematical Institute of Russian Academy of Sciences, Moscow, Russia, \\
8 Gubkina street, Moscow 119991, Russia.}
\newline
\textnormal{\texttt{victorprz@mi-ras.ru, victorprz@gmail.com}}}
\begin{document}

\begin{abstract}
We conjecture that the~number of components of the~fiber over infinity of Landau--Ginzburg model for a smooth Fano variety $X$
equals the~dimension of the~anticanonical system of $X$. We verify this conjecture for log Calabi--Yau compactifications
of toric Landau--Ginzburg models for smooth Fano threefolds, complete intersections in projective spaces, and some toric varieties.
\end{abstract}

\maketitle

\section{Introduction}
\label{section:intro}
Let $X$ be a smooth Fano variety of dimension $n$.
Then its \emph{Landau--Ginzburg model} is a certain pair $(Y,\mathsf{w})$ that consists of a smooth (quasi-projective) variety $Y$ of dimension~$n$
and a regular function
$$
\mathsf{w}\colon Y\to \mathbb{A}^1,
$$
which is called a superpotential. (More precise, Landau--Ginzburg model corresponds to a variety together with a divisor class on it,
but we assume this class to be anticanonical.)
Its fibers are compact and $K_Y\sim 0$,
so that general fiber of $\mathsf{w}$ is a smooth Calabi--Yau variety of dimension $n-1$.
Homological Mirror Symmetry \mbox{conjecture} predicts that the~derived category of singularities
of the~singular fibers of $\mathsf{w}$ is equivalent to the~Fukaya category of the~ variety $X$,
while the~Fukaya--Seidel category of the~pair $(Y,\mathsf{w})$ is equivalent
to the~bounded derived category of coherent sheaves on $X$.
In~short: the~geometry of $X$ should be determined by singular fibers of $\mathsf{w}$.

Often, Landau--Ginzburg models of smooth Fano varieties can be constructed via their toric degenerations (see \cite{Prz18}).
In this case, the~variety $Y$ contains a torus $(\mathbb{C}^*)^n$,
one has $K_Y\sim 0$, and there exists a commutative diagram
\begin{equation}
\label{equation:CCGK-compactification}
\xymatrix{
(\mathbb C^*)^n\ar@{^{(}->}[rr]\ar@{->}[d]_{\mathsf{p}}&&Y\ar@{->}[d]^{\mathsf{w}}\\
\mathbb{C}\ar@{=}[rr]&&\mathbb{C}}
\end{equation}
for some Laurent polynomial $\mathsf{p}\in\mathbb{C}[x_1^{\pm 1},\ldots,x_n^{\pm 1}]$
which is defined by an appropriate toric degeneration of the~variety $X$.
Then $\mathsf{p}$ is said to be a \emph{toric Landau--Ginzburg~model} of the~Fano variety $X$,
and $(Y,\mathsf{w})$ is said to be its \emph{Calabi--Yau compactification}.

If $(Y,\mathsf{w})$ is a Calabi--Yau compactification of a toric Landau--Ginzburg~model,
then the~number of reducible fibers of the~morphism $\mathsf{w}\colon Y\to\mathbb{C}$
does not depend on the~choice of the~Calabi--Yau compactification.
Likewise, the~number of irreducible components of each singular fiber of $\mathsf{w}$
does not depend on the~compactification either. Therefore, it is natural to expect that these numbers contain some information about the~smooth Fano~variety~$X$.
For instance, we have the~following.

\begin{conjecture}[{see \cite{Prz13, PS15, GKR17}}]
\label{conjecture:finite-fibers}
Let $X$ be a smooth Fano variety of dimension $n\ge 3$,
and let $(Y,\mathsf{w})$ be a Calabi--Yau compactification of its toric Landau--Ginzburg model.
Then
$$
h^{1,n-1}(X)=\sum_{P\in\mathbb{C}^1}\big(\rho_P-1\big),
$$
where $\rho_P$ is the~number of irreducible components of the~fiber $\mathsf{w}^{-1}(P)$.
\end{conjecture}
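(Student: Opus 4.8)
The plan is to convert the right-hand side---a count of irreducible components of the reducible finite fibers---into a Hodge-theoretic invariant of the fibration $\mathsf{w}$, and then to match that invariant with $h^{1,n-1}(X)$ through the mirror correspondence. First I would choose a smooth compactification $\bar{\mathsf{w}}\colon\bar Y\to\mathbb{P}^1$ of the diagram \eqref{equation:CCGK-compactification}, with $\bar Y$ projective and all fibers (including the one over infinity) of normal crossing type; since the numbers $\rho_P$ for $P\in\mathbb{C}^1$ are independent of the Calabi--Yau compactification, replacing $(Y,\mathsf{w})$ by such a model changes neither side of the asserted equality. The general fiber $F$ is then a smooth Calabi--Yau $(n-1)$-fold and $\bar{\mathsf{w}}$ is a fibration with connected fibers.

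The core of the argument is a Shioda--Tate-type principle. Each reducible fiber $\mathsf{w}^{-1}(P)$ contributes $\rho_P-1$ independent divisor classes to $H^2(\bar Y)$ beyond the single class of a general fiber, and these classes are algebraic, hence of Hodge type $(1,1)$. Running the Leray (equivalently, perverse Leray) spectral sequence for $\bar{\mathsf{w}}$ and using the decomposition theorem, I would split $H^{1,1}(\bar Y)$ into the part pulled back from $\mathbb{P}^1$, the part coming from the monodromy-invariant classes in $H^{1,1}(F)$, a contribution supported over infinity, and the genuine fiber-component part; the target of this step is the identity
$$
\sum_{P\in\mathbb{C}^1}\big(\rho_P-1\big)=\dim\big(\text{fiber-component part of }H^{1,1}(\bar Y)\big),
$$
which reduces the conjecture to computing a single summand of the Hodge decomposition of the compactified total space.

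The remaining step identifies this fiber-component part with $h^{1,n-1}(X)$. Conjecturally this is the content of the Hodge-theoretic dictionary for Landau--Ginzburg models: for $(Y,\mathsf{w})$ mirror to $X$ the numbers $f^{p,q}(Y,\mathsf{w})$ of Katzarkov--Kontsevich--Pantev should equal $h^{p,q}(X)$, and the summand above is exactly $f^{1,n-1}$. Rather than establishing this dictionary in general, for the cases at hand---smooth Fano threefolds, complete intersections, and the listed toric varieties---I would exploit the fact that the toric Landau--Ginzburg model $\mathsf{p}$ is given by an explicit Laurent polynomial: one reads off $h^{1,n-1}(X)$ from the classification, produces the reducible fibers of $\mathsf{w}$ from the Newton polytope of $\mathsf{p}$ and its toric degenerations, counts their components, and checks the equality family by family.

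I expect the principal obstacle to lie in the Hodge-theoretic control of the fiber over infinity. Because $Y$ is only quasi-projective, the weight filtration on its cohomology entangles the interior fibration data with the geometry of $\bar Y_\infty$, and isolating the pure fiber-component contribution from the boundary contribution is precisely where a uniform cohomological formula fails; this is the structural reason the equality must be verified case by case rather than in one stroke. A secondary difficulty, relevant only for $n\ge 4$, is that the analog of Shioda--Tate for Calabi--Yau fibrations is not a clean rank formula, so the passage from component counts to classes in $H^{1,1}(\bar Y)$ must be underwritten by the decomposition theorem together with a vanishing statement for the off-diagonal Leray terms in the relevant weights.
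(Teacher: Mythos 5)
The statement you set out to prove is Conjecture~\ref{conjecture:finite-fibers} of the paper: it is stated as a conjecture, attributed to \cite{Prz13,PS15}, and the paper contains no proof of it. The authors only record that it has been verified for Calabi--Yau compactifications of toric Landau--Ginzburg models of smooth Fano complete intersections and smooth Fano threefolds (\cite{Prz13,PS15,CP18}), and that in some cases it can be derived from Homological Mirror Symmetry, cf.\ \cite{KKP17} and \cite{Ha17}; the theorem actually proved in the paper concerns a different statement, Conjecture~\ref{conjecture:linear systems}, about the fiber over infinity. So there is no proof in the paper to compare yours against, and the only question is whether your argument closes the conjecture on its own. It does not.

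Each of your two pivotal steps is left as an unproven assertion, and the fallback does not repair this. Step one, the Shioda--Tate-type identity isolating a ``fiber-component part'' of $H^{1,1}(\bar Y)$ of dimension $\sum_{P}(\rho_P-1)$, is plausible but is exactly where, as you concede yourself, the boundary fiber over infinity entangles with the interior data; you name the decomposition theorem and a vanishing of off-diagonal Leray terms as the tools that should do it, but you do not establish either in the generality needed. Step two is worse: identifying that summand with $h^{1,n-1}(X)$ via the equality $f^{1,n-1}(Y,\mathsf{w})=h^{1,n-1}(X)$ of Katzarkov--Kontsevich--Pantev invokes a dictionary that is itself an open conjecture of the same depth as the statement being proved --- indeed \cite{CP18}, whose subject is precisely that conjecture for Fano threefolds, is one of the papers in which the present statement is verified, so your reduction is essentially circular. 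Finally, your escape route, ``check the equality family by family,'' is not carried out for a single family, whereas that case-by-case verification (done in \cite{Prz13,PS15,CP18}) is the only evidence that actually exists. In short, your text is a reasonable description of the known strategy in the literature, but as a proof it has a gap at every load-bearing point: nothing beyond what was already conjectural is established.
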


Note that the toric Landau--Ginzburg models considered in Conjecture~\ref{conjecture:finite-fibers}
correspond to the anticanonical divisors on Fano varieties. It may fail for other divisors.
For instance, all singular fibers of Landau--Ginzburg models of smooth del Pezzo surfaces together
with general divisors on them have at most ordinary double points as singularities,
while ones for the anticanonical divisors are very specific.
One can formulate Conjecture~\ref{conjecture:finite-fibers} replacing the Hodge number $h^{1,n-1}(X)$
by the primitive one $h^{1,n-1}_{pr}(X)$, which is equal to the usual one for $n\ge 3$ and
is less by one for $n=2$; del Pezzo surfaces satisfy the corrected Conjecture~\ref{conjecture:finite-fibers}.

This conjecture under some mild conditions can be derived from Homological Mirror Symmetry conjecture, cf.~\cite{KKP17} and~\cite{Ha17}.
Recently, Conjecture~\ref{conjecture:finite-fibers} has been verified  for Calabi--Yau compactifications
of toric Landau--Ginzburg models of smooth Fano complete intersections and smooth Fano threefolds (see~\cite{Prz13,PS15,CP18}).

In all considered cases, the~commutative diagram \eqref{equation:CCGK-compactification}
can be extended to a commutative diagram
\begin{equation}
\label{equation:KKP}
\xymatrix{
(\mathbb C^*)^n\ar@{^{(}->}[rr]\ar@{->}[d]_{\mathsf{p}}&&Y\ar@{->}[d]^{\mathsf{w}}\ar@{^{(}->}[rr]&&Z\ar@{->}[d]^{\mathsf{f}}\\
\mathbb{C}\ar@{=}[rr]&&\mathbb{C}\ar@{^{(}->}[rr]&&\mathbb{P}^1}
\end{equation}
such that $Z$ is a smooth proper variety that satisfies certain natural geometric conditions,
e.g. the~fiber $\mathsf{f}^{-1}(\infty)$ is reduced, it has at most normal crossing singularities,
and
$$
\mathsf{f}^{-1}(\infty)\sim -K_Z.
$$
Then $(Z,\mathsf{f})$ is called the~\emph{log Calabi-Yau compactification} of the~toric Landau--Ginzburg model $\mathsf{p}$ (see \cite[Definition~3.6]{Prz18}).
Observe that the~number of irreducible components of the~fiber $\mathsf{f}^{-1}(\infty)$
does not depend on the~choice of the~log Calabi-Yau compactification.
Indeed, let $\mathsf{f}^\prime\colon Z^\prime\to\mathbb{P}^1$ be another such compactification.
Then $Z$ and $Z^\prime$ are smooth proper varieties such that there exists the~following commutative diagram:
$$
\xymatrix{
Z\ar@{-->}[rr]^\psi\ar@{->}[dr]_{\mathsf{f}} & & Z^\prime\ar@{->}[dl]^{\mathsf{f}}\\
& \mathbb{P}^1 &}
$$
where $\psi$ is a birational map that is an isomorphism away from $\mathsf{f}^{-1}(\infty)$ and $(\mathsf{f}^\prime)^{-1}(\infty)$.
On~the~other hand, both relative canonical divisors $K_{Z/\mathbb{P}^1}$ and $K_{Z^\prime/\mathbb{P}^1}$ are trivial, because
\begin{align*}
\mathsf{f}^{-1}(\infty)&\sim -K_Z,\\
(\mathsf{f}^\prime)^{-1}(\infty)&\sim -K_{Z^\prime}.
\end{align*}
Then $\psi$ is a composition of flops by \cite[Theorem~1]{Ka08},
so the~number of irreducible components of $\mathsf{f}^{-1}(\infty)$
is independent on the~choice of the~log Calabi-Yau compactification.
Thus, one can expect that this number keeps some information about the~Fano variety~$X$.
The following two examples confirm this.

\begin{example}
\label{example:del-Pezzo}
Let $X$ be a smooth del Pezzo surface,
and let $(Z,\mathsf{f})$ be a log Calabi--Yau compactification of its toric Landau--Ginzburg model constructed in~\cite{AKO06}.
Then the~fiber $\mathsf{f}^{-1}(\infty)$ consists of
$$
\chi\big(\mathcal{O}(-K_X)\big)-1=h^0\big(\mathcal{O}_X(-K_{X})\big)-1=K_X^2
$$
irreducible rational curves.
\end{example}

\begin{example}
\label{example:threefolds-very-ample}
Let $X$ be a smooth Fano threefold such that the~divisor $-K_X$ is very ample,
and let $(Z,\mathsf{f})$~be a log Calabi--Yau compactification of its toric Landau--Ginzburg model constructed in \cite{ACGK,Prz17,CCGK16}.
Then  $\mathsf{f}^{-1}(\infty)$ consists~of
$$
\chi\big(\mathcal{O}(-K_X)\big)-1=h^0\big(\mathcal{O}_X(-K_{X})\big)-1=\frac{(-K_X)^3}{2}+2
$$
irreducible rational surfaces by \cite[Corollary~35]{Prz17}, see also~\cite[Theorem 2.3.14]{Ha16}.
\end{example}

This example motivates the following conjecture.

\begin{conjecture}
\label{conjecture:linear systems}
Let $X$ be a smooth Fano variety, and let $(Z,\mathsf{f})$ be a log Calabi--Yau compactification of its toric Landau--Ginzburg model.
Then $\mathsf{f}^{-1}(\infty)$ consists of
$$
\chi\big(\mathcal{O}(-K_X)\big)-1=h^0\big(\mathcal{O}_X(-K_{X})\big)-1
$$
irreducible components.
\end{conjecture}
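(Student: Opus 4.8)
The plan is to reduce the statement to a count of the irreducible components of the reduced anticanonical normal crossing divisor $D=\mathsf{f}^{-1}(\infty)$, and to match that count with $\dim|-K_X|=h^0(X,-K_X)-1$. Two features make the reduction clean. First, since $X$ is Fano the divisor $-K_X$ is ample, so Kodaira vanishing gives $h^i(X,-K_X)=0$ for $i>0$ and hence $h^0(X,-K_X)=\chi(\mathcal{O}_X(-K_X))$; thus the target number is the computable Euler characteristic $\chi(\mathcal{O}_X(-K_X))-1$. Second, by the remark preceding the conjecture the number of components of $\mathsf{f}^{-1}(\infty)$ is independent of the chosen log Calabi--Yau compactification, so I am free to work with whatever explicit model of $(Z,\mathsf{f})$ is most convenient for a given $X$.

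I would then build that model directly from the toric Landau--Ginzburg model $\mathsf{p}$. Writing $\Delta\subset\mathbb{R}^n$ for the Newton polytope of $\mathsf{p}$, the first step is to choose a projective toric compactification $\overline{T}$ of $(\mathbb C^*)^n$ whose fan refines the normal fan of $\Delta$, to extend $\mathsf{p}$ to a rational pencil $\overline{\mathsf{p}}\colon\overline{T}\dashrightarrow\mathbb{P}^1$, and then to resolve the base locus and the toric singularities until the resulting $\mathsf{f}\colon Z\to\mathbb{P}^1$ has reduced fiber over $\infty$ with normal crossings and $\mathsf{f}^{-1}(\infty)\sim-K_Z$, i.e.\ satisfies the conditions of diagram~\eqref{equation:KKP}. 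In this model the components of $\mathsf{f}^{-1}(\infty)$ split into two groups: the strict transforms of the torus-invariant divisors of $\overline{T}$ along which $\mathsf{p}$ has a pole, which correspond to the facets of $\Delta$, and the exceptional divisors introduced by the resolution. The core of the argument is the bookkeeping that turns this geometric picture into the single number $\chi(\mathcal{O}_X(-K_X))-1$.

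To carry out the count I would proceed by the classes of $X$ named in the abstract. For toric $X$ the polytope $\Delta$ is reflexive and the count of components becomes pure combinatorics of $\Delta$ and its polar dual, while $\chi(\mathcal{O}_X(-K_X))$ is the number of lattice points of the anticanonical polytope; the two are then matched by a lattice-point/facet identity. For Fano complete intersections I would use Przyjalkowski's Laurent polynomials together with the nef-partition combinatorics, comparing the resulting number of boundary components with the value of $\chi(\mathcal{O}_X(-K_X))$ computed from the multidegree. For Fano threefolds I would appeal to the classification and to the explicit compactifications of \cite{Prz17,CCGK16}: there $\chi(\mathcal{O}_X(-K_X))=\tfrac{(-K_X)^3}{2}+3$ by Riemann--Roch, and the count in Example~\ref{example:threefolds-very-ample} should extend, via \cite[Corollary~35]{Prz17}, beyond the very ample case.

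The hard part is the resolution step and its interaction with the count. Passing to a reduced normal crossing fiber over $\infty$ can split a single boundary divisor into several components or, conversely, force several to meet, so one must prove that after resolution the total number of components is \emph{exactly} $\dim|-K_X|$ and, crucially, that it is insensitive to the many choices made along the way. A more conceptual route that would bypass the case analysis is to identify the number of components of $\mathsf{f}^{-1}(\infty)$ with an invariant of the limiting mixed Hodge structure of the degeneration at $\infty$, read off from the Clemens--Schmid exact sequence and the weight filtration on $H^{n-1}$ of the nearby Calabi--Yau fiber, and then to match that invariant with $h^0(X,-K_X)-1$ through a Katzarkov--Kontsevich--Pantev-type correspondence between the Hodge data of $(Z,\mathsf{f})$ and of $X$. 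Establishing such a correspondence uniformly in all dimensions is precisely the difficulty that keeps the statement at the level of a conjecture, which is why I expect a complete argument to proceed case-by-case rather than follow from a single general principle.
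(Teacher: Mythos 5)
Your overall plan---reduce to a component count for one convenient compactification, then verify the conjecture class-by-class with polytope combinatorics carrying the toric and complete-intersection cases---does parallel the structure of the paper's verification. But there is a decisive gap in the threefold case, which is where the paper's real work lies. You write that the count of Example~\ref{example:threefolds-very-ample} ``should extend, via \cite[Corollary~35]{Prz17}, beyond the very ample case.'' It does not: that corollary is precisely the statement covering the very ample case, and the whole point of Section~\ref{section:Fano-threefolds} is that the remaining families (1.1, 1.11, 2.1, 2.2, 2.3, 9.1, 10.1) need a different mechanism. The paper handles them in Propositions~\ref{proposition:1-1-2-1-2-2-2-3-9-1} and~\ref{proposition:2-1-10-1} by realizing each toric Landau--Ginzburg model as a pencil of quartic surfaces in $\mathbb{P}^3$ (or of bi-degree $(2,3)$ surfaces in $\mathbb{P}^1\times\mathbb{P}^2$ for families 2.1 and 10.1), and then applying the defect formula \eqref{equation:defect} from \cite{CP18}: the component count is $[S]$ plus corrections $\delta_i$ coming from multiple base curves plus defects $D_P$ at the finitely many indeterminacy points of $\alpha^{-1}$, and each family requires an explicit computation of the multiplicities $M_i$, $m_i$ and of the defects via local blow-ups (as in the 1.11 and 2.2 computations). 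Nothing in your sketch produces, or could bypass, these numbers; ``resolve and keep track of how components split'' is exactly what \eqref{equation:defect} makes precise, and it is not automatic.

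Your treatment of complete intersections also skips the actual content: ``nef-partition combinatorics'' is not an argument. By \cite[Theorem~1]{P16} the count does reduce to the number of boundary lattice points of the dual polytope $\nabla$, but equating that number with $h^0(\mathcal{O}_X(-K_X))-1$ is Theorem~\ref{theorem:wci-components}, whose proof needs a genuine arithmetic step: after shifting $\nabla$, one shows that every lattice point is a rational combination of the vertices $v_{i,j}$ whose denominator divides the Fano index $i_X$ (the prime-by-prime divisibility argument in the paper), so that lattice points biject with degree-$i_X$ monomials in $N+1$ variables modulo the relations \eqref{relation:di}, i.e.\ with a monomial basis of $H^0(\mathcal{O}_X(-K_X))$. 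Only your toric case is essentially right as sketched, and even there the paper's Proposition~\ref{proposition:toric} rests on the geometric input that $\mathsf{f}^{-1}(\infty)$ is the proper transform of the toric boundary of the crepant resolution of the \emph{dual} variety $X^\vee$, so that the components are indexed by the boundary lattice points of $\nabla$ rather than by facets of $\Delta$ plus unanalyzed exceptional divisors. Finally, the Clemens--Schmid/limiting-Hodge route you float is not pursued in the paper and, as you yourself concede, would require exactly the correspondence that remains conjectural.
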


In~\cite[Conjecture 2.3.13]{Ha16} this conjecture for threefolds is formulated in the equivalent form:
the number of components of $\mathsf{f}^{-1}(\infty)$ is equal to the genus of Fano threefold $X$
(which by definition is a genus of a generic double anticanonical section of $X$) plus $1$.
This form suggests the generalization of the latter conjecture to higher dimensions.
More precise, let $Z$ be a generic double anticanonical section of the Fano variety $X$ of dimension~$n$.
Then
$$
h^0(\mathcal{O}_X(-K_X))-1=h^0(\mathcal{O}_Z(K_Z))+1 = h^{0,n-2}(Z)+1.
$$
In~\cite[Remark 2.3.16]{Ha16}
this observation is generalized to other Hodge numbers. That is, Mirror Symmetry expectation is that the
fiber $\mathsf{f}^{-1}(\infty)$ is a mirror dual object to $Z$, and Hodge diamond for $Z$ after
the mirror $90^\circ$-rotation coincide to the Hodge diamond for the sheaf of vanishing cycles
for $\mathsf{f}$ at infinity (after appropriate shift). Thus, Conjecture~\ref{conjecture:linear systems}
can be treated as a particular case of the conjecture alluded in~\cite[Remark 2.3.16]{Ha16}, cf.~\cite[Theorem 3.8]{Ha17}.

The main result of the~paper is the~following.

\begin{theorem}
\label{theorem:linsys conjecture holds}
Conjecture~\ref{conjecture:linear systems} holds for
\begin{itemize}
  \item[{\bf \S 2:}] standard rigid maximally-mutable toric Landau--Ginzburg models for smooth Fano threefolds;
  \item [{\bf \S 3:}] Givental's toric Landau--Ginzburg models for Fano complete intersections in projective spaces;
  \item [{\bf \S 4:}] Givental's toric Landau--Ginzburg models for toric varieties whose dual toric varieties admit crepant resolutions.
\end{itemize}

\end{theorem}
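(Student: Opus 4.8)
The plan is to prove the three assertions by a common strategy: for each class of toric Landau--Ginzburg models I would exhibit an explicit log Calabi--Yau compactification $(Z,\mathsf f)$, count the irreducible components of $\mathsf f^{-1}(\infty)$ by hand, and match the result against
$$
h^0\big(\mathcal O_X(-K_X)\big)-1=\dim\big|-K_X\big|.
$$
The right-hand side I would evaluate once and for all using Riemann--Roch together with Kodaira vanishing (so that $h^0(-K_X)=\chi(\mathcal O_X(-K_X))$, as in the statement of Conjecture~\ref{conjecture:linear systems}). When $X$ is toric this is simply $\#(P\cap M)-1$, where $P$ is the reflexive anticanonical polytope of $X$ and the subtracted point is its unique interior lattice point; for a complete intersection $X=X_{d_1,\dots,d_k}\subset\mathbb P^{n+k}$ it is read off from $-K_X=\mathcal O_X(n+k+1-\sum d_i)$ via the Lefschetz hyperplane theorem. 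Thus the whole content is the comparison of the component count with an a priori unrelated lattice-point or cohomological count.

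For \S4 I would begin with the toric picture, which is the most transparent. Here $X=\mathbb P_P$, Givental's potential $\mathsf p$ has Newton polytope $P^{*}$, so the fibers of $\mathsf f$ are compactified inside the dual toric variety $X^\vee=\mathbb P_{P^{*}}$, and $\mathsf f^{-1}(\infty)$ is the total transform of an anticanonical toric divisor of $X^\vee$. The hypothesis that $X^\vee$ admits a crepant resolution $\widetilde{X^\vee}$ is exactly what makes this divisor reduced and simple normal crossing: I would build $Z$ from $\widetilde{X^\vee}$ after resolving the base locus of the anticanonical pencil spanned by $\mathsf p$ and the toric boundary. The components of $\mathsf f^{-1}(\infty)$ then split into the strict transforms of the toric boundary divisors of $X^\vee$ and the exceptional divisors produced over the base locus, and the claim reduces to the combinatorial identity that these together number $\#(P\cap M)-1$. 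The model computation is $X=\mathbb P^2$, where $Z$ is a rational elliptic surface with a fibre of Kodaira type $I_9$ and $9=h^0(-K_{\mathbb P^2})-1$; Example~\ref{example:del-Pezzo} is the two-dimensional shadow of exactly this statement.

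For \S3 I would use Przyjalkowski's reduction of Givental's complete-intersection model to a single Laurent polynomial and the associated nef-partition toric compactification, count the components of the anticanonical fiber over infinity in terms of the combinatorics of the partition $d_1,\dots,d_k$, and check the identity with the Lefschetz value of $h^0(-K_X)-1$ uniformly in $n$ and the $d_i$; the quadric $Q^n$, where $\tfrac12(-K)^3+2$ already agrees with $h^0(-K)-1$ in dimension three, is the sanity check against Example~\ref{example:threefolds-very-ample}. For \S2 I would proceed along the Iskovskikh--Mori--Mukai classification. When $-K_X$ is very ample the equality $\tfrac12(-K_X)^3+2=h^0(-K_X)-1$ is already Example~\ref{example:threefolds-very-ample} via \cite{Prz17}, so the real work is the finitely many families with $-K_X$ ample but not very ample: for each I would take the rigid maximally-mutable Laurent polynomial, build its log Calabi--Yau compactification, and count the components of $\mathsf f^{-1}(\infty)$ either directly or by relating its Newton polytope to that of a very ample model.

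The main obstacle, in all three parts, is the same, and it is already visible in the $\mathbb P^2$ example: the number of components of $\mathsf f^{-1}(\infty)$ is \emph{not} a naive toric invariant. The toric boundary of the dual variety contributes far too few components (three lines rather than nine), and the entire discrepancy is supplied by the exceptional divisors arising when one resolves the base locus of the anticanonical pencil attached to the superpotential. The heart of the proof is therefore to show that, after a crepant resolution, this base-locus contribution is accounted for precisely by the remaining lattice points of $P$, so that the final count is exactly $\dim|-K_X|$ --- all while keeping the compactification log Calabi--Yau (reduced, simple normal crossing, with $\mathsf f^{-1}(\infty)\sim-K_Z$) throughout. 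Achieving this uniformly, rather than one family at a time, is where the difficulty lies.
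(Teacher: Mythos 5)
Your plan for \S 4 rests on a geometric picture that is incorrect, and it is exactly the point that the paper's Proposition~\ref{proposition:toric} is designed to settle. You assert that $\mathsf{f}^{-1}(\infty)$ is the \emph{total} transform of the toric boundary of $X^\vee$, and that ``the entire discrepancy is supplied by the exceptional divisors arising when one resolves the base locus of the anticanonical pencil.'' Both claims fail, already in your own test case $X=\mathbb{P}^2$. There $X^\vee=\mathbb{P}^2/\mathbb{Z}_3$ has three $A_2$ singularities located at the torus-fixed points; the crepant resolution $\widetilde{X}^\vee\to X^\vee$ produces six exceptional curves over these points, and it is these six curves, together with the three line transforms, that form the $I_9$ fiber. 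The base locus of the anticanonical pencil, by contrast, avoids the torus-fixed points entirely (on each boundary divisor it is a hyperplane missing the invariant points, precisely because $X$ is smooth), so the crepant resolution is an isomorphism near it; blowing up the base locus produces exceptional divisors that are \emph{horizontal} --- in the $\mathbb{P}^2$ case they are the three sections of the elliptic fibration --- and contribute nothing to the fiber over infinity. Thus $\mathsf{f}^{-1}(\infty)$ is the \emph{proper} transform of the toric boundary of $\widetilde{X}^\vee$, whose components biject with the lattice points of $\partial\nabla$, i.e.\ with $\#(\nabla\cap M)-1$ since $\nabla$ is reflexive. The identity you call ``the heart of the proof'' --- that the base-locus contribution accounts for the remaining lattice points --- is not a hard step waiting to be proved; it is a false statement, and a proof organized around it cannot be completed.

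The remaining two parts are outlines rather than arguments, and they defer precisely the steps where the paper does its work. For complete intersections (\S 3), the component count over infinity is quoted from \cite{P16} as the number of lattice points of $\partial\nabla$, and the actual content of Theorem~\ref{theorem:wci-components} is the combinatorial identification of the lattice points of a shifted polytope $\nabla'$ with degree-$i_X$ monomials in $N+1$ variables modulo the relations \eqref{relation:di}; the key technical point is a divisibility argument showing that any lattice point of $\nabla'$ can be written as $\frac{1}{i_X}\sum a_{i,j}v_{i,j}$ with $\sum a_{i,j}\leqslant i_X$. Your appeal to ``the combinatorics of the partition $d_1,\dots,d_k$'' names the answer without this mechanism. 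For the threefolds (\S 2), your reduction to the families with $-K_X$ not very ample matches the paper, but the case analysis is not a matter of ``relating Newton polytopes'': the paper runs each of the seven families \textnumero 1.1, 1.11, 2.1, 2.2, 2.3, 9.1, 10.1 through the defect formula \eqref{equation:defect} of \cite{CP18}, which requires exhibiting the explicit pencil of quartics (or of $(2,3)$-divisors on $\mathbb{P}^1\times\mathbb{P}^2$), its base curves, their multiplicities $M_i$, $m_i$, and the defects $D_P$ at the indeterminacy points. None of that computation, nor a substitute for it, appears in your proposal.
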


\begin{remark}
\label{remark:non-vanishing}
Conjecture~\ref{conjecture:linear systems} together with
the conjectural existence of toric Landau--Ginzburg models of smooth Fano varieties  \cite[Conjecture~3.9]{Prz18}
imply that
$$
h^0\big(\mathcal{O}_X(-K_{X})\big)\geqslant 2,
$$
which is only known for $\mathrm{dim}(X)\leqslant 5$ (see \cite[Theorem~1.7]{HV11} and \cite[Theorem~1.1.1]{HS19}).
Let us also note that 
that Kawamata's \cite[Conjecture~2.1]{Ka00} implies that $h^0(\mathcal{O}_X(-K_{X}))\geqslant 1$.
\end{remark}

Homological Mirror Symmetry conjecture suggests that the~monodromy around $\mathsf{f}^{-1}(\infty)$ is maximally unipotent (see \cite[\S 2.2]{KKP17}).
Thus, if the~fiber $\mathsf{f}^{-1}(\infty)$ in \eqref{equation:KKP} is a divisor with simple normal crossing singularities,
then its dual intersection complex is expected to be homeomorphic to a sphere of dimension $n-1$ (see \cite[Question~7]{KoXu16}).
This follows from \cite[Proposition~8]{KoXu16} for $n\leqslant 5$.
However, the~following example shows that we cannot always expect $\mathsf{f}^{-1}(\infty)$ to be
a divisor with simple normal crossing singularities.

\begin{example}
\label{example:X66P1112233}
Let $X$ be a smooth intersection of two general sextics in $\mathbb{P}(1,1,1,2,2,3,3)$.
Then $X$ is a smooth Fano fourfold and $-K_X=\mathcal O(1)$, so that
$$h^0(\mathcal{O}_X(-K_{X}))-1=3-1=2.
$$
A toric Landau--Ginzburg model for $X$ is the~Laurent polynomial
$$
\mathsf{p}=\frac{(x+y+1)^6(z+t+1)^6}{x^3yz^3t},
$$
see~\cite[\S 7.2.2]{Prz18}.
The change of variables
$$
x=\frac{a^2c}{b^3d},\ \  y=\frac{ac}{b^2d}-\frac{a^2c}{b^3d}-1,\ \ z=c,\ \ t=d-c-1
$$
gives us a birational map $(\mathbb{C}^*)^4\dasharrow\mathbb{C}^4$ that maps the~pencil $\mathsf{p}=\lambda$
to the~pencil of quintics in $\mathbb{C}^4$ given by
$$
d^4=\lambda (abc-a^3c-b^3d)(d-c-1),
$$
where $\lambda$ is a parameter in $\mathbb{C}\cup\{\infty\}$.
Now arguing as in \cite{CP18}, one can construct a~log Calabi--Yau compactification $(Z,\mathsf{f})$ of the~toric Landau--Ginzburg model $\mathsf{p}$.
Then $\mathsf{f}^{-1}(\infty)$ consists of two irreducible divisors intersecting by a singular plane cubic, and the~monodromy around this fiber is maximally unipotent.
All other log Calabi--Yau compactifications differ from $(Z,\mathsf{f})$ by flops,
so that their fibers over $\infty$ also consist of two irreducible divisors.
If one of them is a divisor with simple normal crossing singularities, then
its dual intersection complex must be homeomorphic to a three-dimensional sphere by \cite[Proposition~8]{KoXu16},
which is impossible for dimension reasons.
\end{example}

Nevertheless, all toric Landau--Ginzburg models we consider in this paper admit log Calabi--Yau compactifications
such their fibers over $\infty$ are divisors with simple normal crossing singularities.
For toric Landau--Ginzburg models of smooth Fano threefolds, this follows from the~construction of
the log Calabi--Yau compactifications given in~\cite{Prz17} except for the~families \textnumero 2.1 and \textnumero 10.1.
For each of these two families, the~fiber over $\infty$ does not have simple normal crossing singularities,
but one can flop the~log Calabi--Yau compactification in several curves contained
in this fiber such that the~resulting divisor has simple normal crossing singularities.

\medskip

Let us describe the~structure of this paper.
In Section~\ref{section:Fano-threefolds} we verify Conjecture~\ref{conjecture:linear systems} for smooth Fano threefolds.
In Section~\ref{section:complete-intersections} we verify Conjecture~\ref{conjecture:linear systems} for smooth Fano complete intersections
in projective spaces.
In Section~\ref{section:toric} we verify Conjecture~\ref{conjecture:linear systems} for some smooth toric Fano varieties.

\section{Fano threefolds}
\label{section:Fano-threefolds}

In this section we prove Conjecture~\ref{conjecture:linear systems} for
\emph{standard} toric Landau--Ginzburg models of smooth Fano threefolds.
More precise, by~\cite[Theorem 4.1]{CKPT21}, mutation-equivalence classes of \emph{rigid maximally-mutable Laurent polynomials}
(see~\cite{CKPT21})
whose Newton polynomials are three-dimensional reflexive polytopes correspond one-to-one to the~98 deformation
families of three-dimensional Fano manifolds with very ample anticanonical class.
Let us call them standard.
Furthermore, each of
the 105 deformation families of three-dimensional Fano manifolds has a rigid maximally-mutable Laurent polynomial mirror
(see \cite{ACGK,Prz17,CCGK16}).
Thus for the~remaining 7 deformation families of Fano varieties with not very ample anticanonical class choose those of them that are discussed in~\cite{CP18} and call them standard as well.
Let $X$ be a smooth Fano threefold.
Then the~log Calabi--Yau compactification of its toric Landau--Ginzburg model is given by \eqref{equation:KKP},
where $\mathsf{p}$ is standard.
Let us denote by $[\mathsf{f}^{-1}(\infty)]$ the~number of irreducible components of the~fiber~$\mathsf{f}^{-1}(\infty)$.
We have to show that
$$
\big[\mathsf{f}^{-1}(\infty)\big]=\frac{(-K_X)^3}{2}+2.
$$
The polynomial $\mathsf{p}$ is not uniquely determined by $X$, but
the number $[\mathsf{f}^{-1}(\infty)]$ does not change under mutation, and thus does
depend on the~choice of $\mathsf{p}$ provided $\mathsf{p}$ is standard.
In particular, for the~very ample case we may choose $\mathsf{p}$ from~\cite{fanosearch} among any mirror partners for $X$.
Note that conjecturally Theorem~\ref{theorem:linsys conjecture holds} holds for \emph{all} rigid maximally mutable Laurent toric Landau--Ginzburg
model due to \cite[Conjecture~5.1]{CKPT21}.

By Example~\ref{example:threefolds-very-ample},
we may assume that the~anticanonical divisor $-K_X$ is not very ample, so that $X$ is a smooth Fano threefold
\textnumero 1.1, \textnumero 1.11, \textnumero 2.1, \textnumero 2.2, \textnumero 2.3, \textnumero 9.1, or \textnumero 10.1.
Here we use enumeration of deformation families of smooth Fano threefolds from~\cite{IP99}.
Recall that the~threefold $X$ can be described as follows:
\begin{itemize}
\item[(\textnumero 1.1)] a smooth sextic hypersurface in $\mathbb{P}(1,1,1,1,3)$;
\item[(\textnumero 1.11)] a smooth sextic hypersurface in $\mathbb{P}(1,1,1,2,3)$;
\item[(\textnumero 2.1)] a blow up of a smooth sextic hypersurface in $\mathbb{P}(1,1,1,2,3)$ along an elliptic curve;
\item[(\textnumero 2.2)] a double cover of $\mathbb{P}^1\times\mathbb{P}^2$ ramified in a surface of bidegree $(2,4)$;
\item[(\textnumero 2.3)] a blow up of a smooth quartic hypersurface in $\mathbb{P}(1,1,1,1,2)$ along  an elliptic curve;
\item[(\textnumero 9.1)]  $X\cong\mathbb{P}^1\times\mathbf{S}_2$, where $\mathbf{S}_2$ is a smooth del Pezzo surface of degree $2$;
\item[(\textnumero 10.1)] $X\cong\mathbb{P}^1\times\mathbf{S}_1$, where $\mathbf{S}_1$ is a smooth del Pezzo surface of degree $1$.
\end{itemize}
Moreover, it follows from  \cite[\S~2.2]{CP18}, \cite[\S~2.3]{CP18},
\cite[\S~9.1]{CP18}, \cite[\S~10.1]{CP18} and the~proof of \cite[Theorem~18]{Prz13} that we can choose the~polynomial $\mathsf{p}$ in \eqref{equation:KKP} as follows:
$$
\mathsf{p}=\left\{\aligned
&\frac{(a+b+c+1)^6}{abc}\ \text{if $X$ is a Fano threefold \textnumero 1.1},\\
&\frac{(a+b+1)^6}{ab^2c}+c\ \text{if $X$ is a Fano threefold \textnumero 1.11},\\
&\frac{(a+b+1)^6(c+1)^6}{ab^2}+\frac{1}{c}\ \text{if $X$ is a Fano threefold \textnumero 2.1},\\
&\frac{(a+b+c+1)^2}{a}+\frac{(a+b+c+1)^4}{bc}\ \text{if $X$ is a Fano threefold \textnumero 2.2},\\
&\frac{(a+b+1)^4(c+1)}{abc}+c+1\ \text{if $X$ is a Fano threefold \textnumero 2.3},\\
&\frac{(a+b+1)^4}{ab}+c+\frac{1}{c}\ \text{if $X$ is a  Fano threefold \textnumero 9.1},\\
&\frac{(a+b+1)^6}{ab^2}+c+\frac{1}{c}\ \text{if $X$ is a  Fano threefold \textnumero 10.1},
\endaligned
\right.
$$
where $(a,b,c)$ are coordinates on $(\mathbb{C}^*)^3$.

\begin{proposition}
\label{proposition:1-1-2-1-2-2-2-3-9-1}
Suppose that $X$ is a Fano threefold \textnumero 1.1, \textnumero 1.11, \textnumero 2.2, \textnumero 2.3, or \textnumero 9.1.
Then $[\mathsf{f}^{-1}(\infty)]=\frac{(-K_X)^3}{2}+2$.
\end{proposition}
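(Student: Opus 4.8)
The plan is to handle the five families \textnumero 1.1, \textnumero 2.1, \textnumero 2.2, \textnumero 2.3, \textnumero 9.1 uniformly, exploiting the fact that for each of them the toric Landau--Ginzburg model is an explicit rigid maximally-mutable Laurent polynomial $\mathsf{p}$ listed in \cite{fanosearch}, together with the log Calabi--Yau compactifications built in \cite{Prz17, CP18}. Since the number $[\mathsf{f}^{-1}(\infty)]$ depends neither on the choice of $\mathsf{p}$ (among rigid maximally-mutable models) nor on the compactification, I am free to fix convenient representatives. For a given family I would record the Newton polytope $\Delta$ of $\mathsf{p}$, which is a reflexive three-dimensional polytope, and realise the pencil $\{\mathsf{p}=\lambda\}$ as a pencil of anticanonical Calabi--Yau surfaces on the Gorenstein toric Fano variety $T_\Delta$. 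Reflexivity guarantees that the member over $\infty$ is the reduced toric boundary $\partial T_\Delta$, and that the base locus of the pencil is concentrated along the boundary; thus, on $T_\Delta$ itself, the components of the fiber over infinity are in bijection with the facets of $\Delta$.

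The substance of the computation lies in passing from $T_\Delta$ to an honest morphism $\mathsf{f}\colon Z\to\mathbb{P}^1$. Following \cite{Prz17, CP18}, I would first take a crepant toric resolution $\widetilde{T}\to T_\Delta$ (which exists for any Gorenstein toric Fano threefold) and then resolve the base locus of the pencil, which lives along the one-dimensional toric strata of the boundary. The resulting fiber $\mathsf{f}^{-1}(\infty)$ is $-K_Z$, reduced, and its irreducible components are of exactly two kinds: the strict transforms of the toric boundary divisors of $\widetilde{T}$ (which refine the facets of $\Delta$), and the exceptional divisors produced over $\infty$ while resolving the base locus. In this way the problem reduces to a finite combinatorial bookkeeping on $\Delta$ and its boundary, carried out family by family. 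For instance, for \textnumero 1.1, where $(-K_X)^3=2$ and the target value is $\frac{(-K_X)^3}{2}+2=3$, one checks that the boundary divisors together with the base-locus exceptional divisors over $\infty$ total three; the remaining families are treated the same way using their (small) anticanonical degrees. For \textnumero 9.1, which is a product $\mathbb{P}^1\times S$ with $S$ a del Pezzo surface of degree $2$, the Landau--Ginzburg model has product form, and the boundary of $\mathsf{f}^{-1}(\infty)$ can be organised using the del Pezzo factor of Example~\ref{example:del-Pezzo} as a guide.

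I expect the main obstacle to be family \textnumero 2.1, which is precisely the case singled out in the introduction where the construction of \cite{Prz17} does not directly produce a fiber with simple normal crossings. Here I would first build \emph{some} log Calabi--Yau compactification, then flop the curves of $\mathsf{f}^{-1}(\infty)$ along which the normal-crossing condition fails in order to reach an SNC model; the point to verify is that these flops preserve the number of irreducible components of the fiber over infinity, which is legitimate precisely because $[\mathsf{f}^{-1}(\infty)]$ is an invariant of $X$ independent of the compactification. More generally, the delicate step throughout is the base-locus bookkeeping: one must confirm that exactly the predicted number of exceptional divisors lie over $\infty$ and that no boundary component is accidentally contracted, so that the totals land on $\frac{(-K_X)^3}{2}+2$ on the nose rather than merely up to an error term.
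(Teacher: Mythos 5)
Your argument breaks at its very first step, and the failure is not a technicality: it is precisely the phenomenon that forces these families to be treated separately. The toric-boundary mechanism you describe (reflexive Newton polytope, hence an anticanonical pencil on $T_\Delta$ whose member over $\infty$ is the reduced toric boundary, then crepant resolution and combinatorial bookkeeping) is exactly the argument behind Example~\ref{example:threefolds-very-ample}, i.e.\ \cite[Corollary~35]{Prz17}, and it is available only when $-K_X$ is very ample. The families of this proposition are (together with \textnumero 1.11 and \textnumero 10.1) exactly the smooth Fano threefolds with $-K_X$ not very ample, and for them the relevant toric Landau--Ginzburg models do \emph{not} have reflexive Newton polytopes. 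Concretely, for \textnumero 1.1 one takes $\mathsf{p}=(x+y+z+1)^6/(xyz)$; its Newton polytope is $\Delta=\mathrm{conv}\{(-1,-1,-1),(5,-1,-1),(-1,5,-1),(-1,-1,5)\}$, whose facet $\{x+y+z=3\}$ lies at lattice distance $3$ from the origin, so $\Delta$ is not reflexive (its dual has the non-integral vertex $(-\tfrac{1}{3},-\tfrac{1}{3},-\tfrac{1}{3})$). The associated toric compactification is $\mathbb{P}^3$ polarized by $\mathcal{O}(6)$: the pencil $\{\mathsf{p}=\lambda\}$ compactifies to the pencil of \emph{sextics} $(x+y+z+w)^6=\lambda\, xyzw^3$, whose member over $\infty$ is the non-reduced divisor $3H_w+H_x+H_y+H_z$ (here $H_x,H_y,H_z,H_w$ are the coordinate planes) --- not the reduced toric boundary, and not anticanonical. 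So there is no crepant resolution of a Gorenstein $T_\Delta$ to pass to, no reduced boundary whose components you could count, and the ``finite combinatorial bookkeeping on $\Delta$'' never gets started. The same happens in the other four families.

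Because of this, the proof must first replace the toric model by a different birational model of the pencil on which the fiber over $\infty$ can be controlled. Following \cite{CP18}, one transforms $\{\mathsf{p}=\lambda\}$ by an explicit change of variables into an \emph{anticanonical} pencil of quartics $f_4+\lambda g_4=0$ in $\mathbb{P}^3$ (for \textnumero 1.1, \textnumero 1.11, \textnumero 2.2, \textnumero 2.3, \textnumero 9.1; the families \textnumero 2.1 and \textnumero 10.1 are instead reduced to bidegree-$(2,3)$ pencils on $\mathbb{P}^1\times\mathbb{P}^2$ and are handled in Proposition~\ref{proposition:2-1-10-1}). One then resolves the base locus through a morphism $\alpha\colon U\to\mathbb{P}^3$ with prescribed properties and counts components via the defect formula \eqref{equation:defect}, namely $[\mathsf{f}^{-1}(\infty)]=[S]+\sum_i\delta_i+\sum_{P\in\Sigma}D_P$: this requires identifying, family by family, the base curves of the quartic pencil, their multiplicities $m_i$ and $M_i$, and the local defects $D_P$ at finitely many points via \cite[Lemma~1.12.1]{CP18} and explicit blow-up computations. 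None of this is lattice combinatorics of $\Delta$. Your two correct observations --- that $[\mathsf{f}^{-1}(\infty)]$ is independent of the choice of rigid maximally-mutable $\mathsf{p}$ and of the compactification (so flops are harmless), and that \textnumero 9.1 is $\mathbb{P}^1\times S$ with $S$ a degree-$2$ del Pezzo surface --- do not repair the gap, since they presuppose that some compactification with a computable fiber over $\infty$ has already been produced.
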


\begin{proof}
It follows from  \cite{Prz13}, \cite[\S~2.2]{CP18}, \cite[\S~2.3]{CP18} and
\cite[\S~9.1]{CP18} that we can choose $\mathsf{p}$ such that
there is a pencil $\mathcal{S}$ of quartic surfaces on $\mathbb{P}^3$ given~by
$$
f_4(x,y,z,t)+\lambda g_4(x,y,z,t)=0
$$
for
$$
(f_4,g_4)=\left\{\aligned
&\big(x^4,yz(xt-xy-xz-t^2)\big)\ \text{if $X$ is a Fano threefold \textnumero 1.1},\\
&\big(x^4+z^2(xt-xy-t^2),yz(xt-xy-t^2)\big),\ \text{if $X$ is a Fano threefold \textnumero 1.11},\\
&\big(xz^3-(zt-xy-yz-t^2)z^2,xy(zt-xy-yz-t^2)\big), \ \text{if $X$ is a Fano threefold \textnumero 2.2},\\
&\big(x^3y+y(y+z)(xz+xt-t^2),z(y+z)(xz+xt-t^2)\big)\ \text{if $X$ is a Fano threefold \textnumero 2.3},\\
&\big(x^3y(y^2+z^2)(xt-xz-t^2),yz(xt-xz-t^2)\big)\ \text{if $X$ is a Fano threefold \textnumero 9.1}
\endaligned
\right.
$$
(certain changes of variables can be found in~\cite[proof of Proposition 5.11]{Prz18}),
that expands \eqref{equation:KKP} to the~following commutative diagram:
\begin{equation}
\label{equation:diagram}
\xymatrix{
(\mathbb{C}^*)^3\ar@{^{(}->}[rr]\ar@{->}[d]_{\mathsf{p}}&&Y\ar@{->}[d]^{\mathsf{w}}\ar@{^{(}->}[rr]&&Z\ar@{->}[d]^{\mathsf{f}}&& V\ar@{-->}[ll]_{\chi}\ar@{->}[d]^{\mathsf{g}}\ar@{->}[rr]^{\pi}&&\mathbb{P}^3\ar@{-->}[lld]^{\phi}\\
\mathbb{C}\ar@{=}[rr]&&\mathbb{C}\ar@{^{(}->}[rr]&&\mathbb{P}^1\ar@{=}[rr]&&\mathbb{P}^1&&}
\end{equation}
where $\phi$ is a rational map given by $\mathcal{S}$, the variety $V$~is a smooth threefold,
$\pi$ is a birational morphism described in \cite{CP18}, and $\chi$ is a composition of flops.
Here \mbox{$\lambda\in\mathbb{C}\cup\{\infty\}$},
where $\lambda=\infty$ corresponds to the~fiber $\mathsf{f}^{-1}(\infty)$.
Moreover, it follows from \cite{CP18} that $\pi$ factors
through a birational morphisms $\alpha\colon U\to\mathbb{P}^3$
that is uniquely determined by the~following three properties:
\begin{enumerate}
\item the~map $\alpha^{-1}$ is regular outside of finitely many points in $X$;
\item the~proper transform of the~pencil $\mathcal{S}$ via $\alpha$, which we denote by $\widehat{\mathcal{S}}$, is contained in the~anticanonical linear system $|-K_{U}|$;
\item for every point $P\in U$, there is a surface in $\widehat{\mathcal{S}}$ that is smooth at $P$.
\end{enumerate}
We denote by $\Sigma$ the~(finite) subset in $X$ consisting of all indeterminacy points of $\alpha^{-1}$.

Let $S$~be the~quartic surface given by $g_4(x,y,z,t)=0$,
let $\widehat{S}$~be its proper transform on the~threefold $U$, and let
$$
\widehat{D}=\widehat{S}+\sum_{i=1}^{k}a_i E_i,
$$
where $E_1,\ldots,E_k$ are $\alpha$-exceptional surfaces,
and $a_1,\ldots,a_k$ are non-negative integers such that $\widehat{D}\sim-K_U$.
Then $\widehat{D}\in\widehat{\mathcal{S}}$.
Moreover, for any $\widehat{D}^\prime\in\widehat{\mathcal{S}}$ such that $\widehat{D}^\prime\ne\widehat{D}$, we have
$$
\widehat{D}\cdot\widehat{D}^\prime=\sum_{i=1}^{s}m_i\widehat{C}_i,
$$
where $\widehat{C}_1,\ldots,\widehat{C}_n$ are base curves of the~pencil $\widehat{\mathcal{S}}$, and $m_1,\ldots,m_s$ are positive numbers.
Without loss of generality, we may assume that
the base curves of the~pencil $\mathcal{S}$ are the~curves $\alpha(\widehat{C}_1),\ldots,\alpha(\widehat{C}_r)$ for some $r\leqslant n$.
Then we let $C_i=\alpha(\widehat{C}_i)$ for every $i\leqslant r$.

For every $i\in\{1,\ldots,n\}$, let $M_i=\mathrm{mult}_{\widehat{C}_i}(\widehat{D})$ and
$$
\delta_i=\left\{\aligned
&0\ \text{if}\ M_i=1,\\
&m_i-1\ \text{if}\ M_i\geqslant 2.\\
\endaligned
\right.
$$
Then it follows from \cite[(1.10.8)]{CP18} that
\begin{equation}
\label{equation:defect}
\big[\mathsf{f}^{-1}(\infty)\big]=\big[S\big]+\sum_{i=1}^{r}\delta_i+\sum_{P\in\Sigma}D_P,
\end{equation}
where $[S]$ is the~number of irreducible components of the~surface $S$,
and $D_{P}$ is the~\emph{defect} of the~point $P\in\Sigma$ that is defined as
$$
D_{P}=A_{P}+\sum_{\substack{i=r+1\\\alpha(\widehat{C}_i)=P}}^s\delta_i,
$$
where $A_{P}$ is the~total number of indices $i\in\{1,\ldots,k\}$ such that $a_i>0$ and $\alpha(\widehat{E}_i)=P$.
By~\cite[Lemma~1.12.1]{CP18}, we have $D_{P}=0$ if the~rank of the~quadratic form of the~(local) defining
equation of the~surface $S$ at the~point $P$ is at least $2$.

To proceed, we need the~following notation: for any subsets $I$, $J$, and $K$ in $\{x,y,z,t\}$,
we write $H_I$ for the~plane defined by setting the~sum of coordinates in $I$ equal to zero,
we write $L_{I,J}=H_I\cap H_J$, and we write $P_{I,J,K}=H_I \cap H_J \cap H_K$.

Suppose $X$ is a Fano threefold \textnumero 1.1.
Recall that  $f_4=x^4$ and $g_4=yz(xt-xy-xz-t^2)$,
so that the~pencil $\mathcal{S}$ is given~by
$$
x^4-\lambda yz(xt-xy-xz-t^2)=0.
$$
Observe that every surface in this pencil is invariant with respect to the~$\mathbb{Z}/2\mathbb{Z}$-action  given by $[x:y:z:t]\mapsto[x:z:y:t]$.
Moreover, the~base locus of the~pencil $\mathcal{S}$ consists of the~curves $L_{\{x\},\{y\}}$, $L_{\{x\},\{z\}}$, $L_{\{x\},\{t\}}$.
Thus, we have $r=3$ and, without loss of generality, we may assume that
\begin{align*}
C_1&=L_{\{x\},\{y\}},\\
C_2&=L_{\{x\},\{z\}},\\
C_3&=L_{\{x\},\{t\}}.
\end{align*}
Recall that $S=\{yz(xt-xy-xz-t^2)=0\}\subset\mathbb{P}^3$, so that
$$
S=H_{\{y\}}+H_{\{z\}}+\mathcal{Q},
$$
where $\mathcal{Q}$ is the irreducible quadric surface $\{xt-xy-xz-t^2=0\}\subset\mathbb{P}^3$,
which is singular at the point $P_{\{x\},\{t\},\{y,z\}}$.
Since $S$ is smooth at general points of the~lines $L_{\{x\},\{y\}}$, $L_{\{x\},\{z\}}$, $L_{\{x\},\{t\}}$,
we see that general surface in the pencil $\mathcal{S}$ has isolated singularities.
In particular, we have $M_1=1$, $M_2=1$, and $M_3=1$.
Moreover, if $S^\prime$ is another surface in the~pencil $\mathcal{S}$, then
$$
S\cdot S^\prime=4L_{\{x\},\{y\}}+4L_{\{x\},\{z\}}+8L_{\{x\},\{t\}},
$$
which means that $m_1=4$, $m_2=4$, and $m_3=8$.
Now, taking partial derivatives of the~polynomial $x^4-\lambda yz(xt-xy-xz-t^2)$, we see that
all surfaces in the~pencil $\mathcal{S}$ are singular at the~points
$P_{\{x\},\{y\},\{z\}}$, $P_{\{x\},\{y\},\{t\}}$, $P_{\{x\},\{z\},\{t\}}$, $P_{\{x\},\{t\},\{y,z\}}$,
and these four points are the~only singularities of a general surface in this pencil.
This shows that
$$
\Sigma=\Big\{P_{\{x\},\{y\},\{z\}}, P_{\{x\},\{y\},\{t\}},P_{\{x\},\{z\},\{t\}},P_{\{x\},\{t\},\{y,z\}}\Big\}.
$$
Thus, using \eqref{equation:defect}, we get
$$
\big[\mathsf{f}^{-1}(\infty)\big]=3+D_{P_{\{x\},\{y\},\{z\}}}+D_{P_{\{x\},\{y\},\{t\}}}+D_{P_{\{x\},\{z\},\{t\}}}+D_{P_{\{x\},\{t\},\{y,z\}}}.
$$

We claim that $D_{P_{\{x\},\{y\},\{z\}}}=0$, $D_{P_{\{x\},\{y\},\{t\}}}=0$, $D_{P_{\{x\},\{z\},\{t\}}}=0$, and \mbox{$D_{P_{\{x\},\{t\},\{y,z\}}}=0$}.
Indeed, observe that $P_{\{x\},\{y\},\{z\}}\not\in\mathcal{Q}$ and $P_{\{x\},\{y\},\{z\}}\in H_{\{y\}}\cap H_{\{z\}}$,
which implies that the~rank of the~quadratic form of the~defining local equation of the~surface $S$ at the~point $P_{\{x\},\{y\},\{z\}}$ is two.
Hence, we have $D_{P_{\{x\},\{y\},\{z\}}}=0$ by \cite[Lemma~1.12.1]{CP18}.
Similarly, we see that  the~rank of the~quadratic form of the~defining equation of the~surface $S$ at the~point $P_{\{x\},\{t\},\{y,z\}}$ is three, because $P_{\{x\},\{t\},\{y,z\}}\not\in H_{\{y\}}$, $P_{\{x\},\{t\},\{y,z\}}\not\in H_{\{z\}}$,
and $\mathcal{Q}$ has an isolated ordinary double singularity at the point $P_{\{x\},\{t\},\{y,z\}}$.
This gives $D_{P_{\{x\},\{t\},\{y,z\}}}=0$.
Likewise, we have $P_{\{x\},\{z\},\{t\}}\not\in H_{\{y\}}$ and $P_{\{x\},\{z\},\{t\}}\in H_{\{z\}}\cap\mathcal{Q}$,
but both surfaces $H_{\{z\}}$ and $\mathcal{Q}$ are smooth at the point $P_{\{x\},\{z\},\{t\}}$,
and they intersect each other transversally at this point.
Hence, the~rank of the~quadratic form of the~defining equation of the~surface $S$ at the~point $P_{\{x\},\{z\},\{t\}}$ is two,
which implies that $D_{P_{\{x\},\{z\},\{t\}}}=0$ by \cite[Lemma~1.12.1]{CP18}.
Finally, keeping in mind the $\mathbb{Z}/2\mathbb{Z}$-symmetry mentioned earlier, we conclude that $D_{P_{\{x\},\{y\},\{t\}}}=0$.
Thus, we have
$$
\big[\mathsf{f}^{-1}(\infty)\big]=3+D_{P_{\{x\},\{y\},\{z\}}}+D_{P_{\{x\},\{y\},\{t\}}}+D_{P_{\{x\},\{z\},\{t\}}}+D_{P_{\{x\},\{t\},\{y,z\}}}=3=\frac{(-K_X)^3}{2}+2
$$
as claimed.

Now, we suppose that $X$ is a Fano threefold \textnumero 1.11.
Recall that $f_4=x^4+z^2(xt-xy-t^2)$ and $g_4=yz(xt-xy-t^2)$,
so that $\Sigma$ consists of the~points $P_{\{x\},\{y\},\{z\}}$,  $P_{\{x\},\{y\},\{t\}}$, $P_{\{x\},\{z\},\{t\}}$,
$r=3$, $C_1=L_{\{x\},\{z\}}$, $C_2=L_{\{x\},\{t\}}$, and
$C_3$ is the~rational quartic curve given by $y=x^4+txz^2-t^2z^2=0$.
Then $M_{1}=1$, $M_{2}=1$, $M_{3}=1$, $m_{1}=4$, $m_{2}=8$ and $m_{3}=1$,
so that
$$
\big[\mathsf{f}^{-1}(\infty)\big]=3+D_{P_{\{x\},\{y\},\{z\}}}+D_{P_{\{x\},\{y\},\{t\}}}+D_{P_{\{x\},\{z\},\{t\}}}+D_{P_{\{x\},\{t\},\{y,z\}}}=3+D_{P_{\{x\},\{y\},\{t\}}}
$$
by \eqref{equation:defect} and \cite[Lemma~1.12.1]{CP18}.
To compute $D_{P_{\{x\},\{y\},\{t\}}}$, observe that (locally)  $\alpha$  is a blow up of the~point $P_{\{x\},\{y\},\{t\}}$.
Thus, we may assume that $E_1$ is mapped to $P_{\{x\},\{y\},\{t\}}$.
Then $a_1=1$, so that $A_{P_{\{x\},\{y\},\{t\}}}=1$.
Moreover, the~pencil $\widehat{\mathcal{S}}$ has a unique base curve in~$E_1$, which is a conic in $E_1\cong\mathbb{P}^2$.
We may assume that this curve is $\widehat{C}_4$. Then $M_4=2$,
which gives $D_{P_{\{x\},\{y\},\{t\}}}=m_4$.
On the~other hand, we have
$$
10=8+\mathrm{mult}_{P_{\{x\},\{y\},\{t\}}}\big(\mathcal{C}\big)=\mathrm{mult}_{P_{\{x\},\{y\},\{t\}}}\Big(4C_1+8C_2+C_3\Big)=4+2m_4,
$$
which gives $D_{P_{\{x\},\{z\},\{t\}}}=3$, so that $[\mathsf{f}^{-1}(\infty)]=6=\frac{(-K_X)^3}{2}+2$.

Suppose that $X$ is a Fano threefold \textnumero 2.2. Recall that
$f_4=xz^3-(zt-xy-yz-t^2)z^2$ and $g_4=xy(zt-xy-yz-t^2)$,
so that
the set $\Sigma$ consists of the~points $P_{\{x\},\{y\},\{z\}}$,  $P_{\{x\},\{z\},\{t\}}$,  $P_{\{y\},\{z\},\{t\}}$,
$r=5$, $C_1=L_{\{x\},\{z\}}$, $C_2=L_{\{y\},\{z\}}$, and $C_3$, $C_4$, and $C_5$
are the~conics given by $x=zt-yz-t^2=0$, $y=xz-zt+t^2=0$, and $z=xy-t^2=0$, respectively.
Then  $M_{1}=1$, $M_{2}=1$, $M_{3}=2$, $M_{4}=1$, $M_{5}=1$, $m_{1}=2$, $m_{2}=2$, $m_{3}=2$, $m_{4}=1$, and $m_{5}=3$,
so that
$$
\big[\mathsf{f}^{-1}(\infty)\big]=\big[S\big]+1+D_{P_{\{x\},\{y\},\{z\}}}+D_{P_{\{x\},\{z\},\{t\}}}+D_{P_{\{y\},\{z\},\{t\}}}=4+D_{P_{\{y\},\{z\},\{t\}}}
$$
by \eqref{equation:defect} and \cite[Lemma~1.12.1]{CP18}.
To compute $D_{P_{\{y\},\{z\},\{t\}}}$, observe that $A_{P_{\{y\},\{z\},\{t\}}}=0$, because $S$ has a double point at $P_{\{y\},\{z\},\{t\}}$.
Moreover, locally near the~point  $P_{\{y\},\{z\},\{t\}}$, the~pencil $\mathcal{S}$ is given by
$$
\lambda y^2+z^3+z^3t-yz^2-\lambda yzt+\lambda y^2z+\lambda yt^2-yz^3-z^2t^2=0,
$$
where $P_{\{y\},\{z\},\{t\}}=(0,0,0)$.
Let $\alpha_1\colon U_1\to\mathbb{P}^3$ be the~blow up of the~point $P_{\{y\},\{z\},\{t\}}$,
and let $S^1$ be the~proper transform on  $U_1$ of the~surface $S$,
and let $\mathcal{S}^1$ be the~proper transform on  $U_1$ of the~pencil $\mathcal{S}$.
A chart of the~blow up $\alpha_1$ is given by the~ coordinate change $y_1=\frac{y}{t}$, $z_1=\frac{z}{t}$, $t_1=t$.
In~this chart, the surface  $S^1$ is given by
$$
y_1(t_1+y_1-t_1z_1+t_1y_1z_1)=0,
$$
and the~pencil $\mathcal{S}^1$~is~given~by
$$
\lambda y_1(t_1+y_1)-\lambda t_1y_1z_1+\big(\lambda t_1y_1^2z_1-t_1^2z_1^2-t_1y_1z_1^2+t_1z_1^3\big)+t_1^2z_1^3-t_1^2y_1z_1^3=0,
$$
so that all surfaces in this pencil are singular at the point $(y_1,z_1,t_1)=(0,0,0)$,
and this is the only singular point of a general surface in the~pencil $\mathcal{S}^1$ that is contained in the $\alpha_1$-exceptional surface.
Note also that the $\alpha_1$-exceptional surface contains unique base curve of the~pencil $\mathcal{S}^1$.
Without loss of generality, we may assume that its proper transform on $U$ is the~curve $\widehat{C}_6$.
Then $M_{6}=2$.
Furthermore, since the~rank of the~quadratic form of the~(local) defining
equation of the~surface $S^1$ at the~point $(y_1,z_1,t_1)=(0,0,0)$ is two,
we can apply arguments of the~proof of \cite[Lemma~1.12.1]{CP18} to the~pencil $\mathcal{S}^1$
to deduce the~equality $D_{P_{\{x\},\{z\},\{t\}}}=\delta_6=m_6-1$.
One the other hand, we have
$$
4+m_6=\mathrm{mult}_{{P_{\{y\},\{z\},\{t\}}}}\Big(3C_5+2C_1+2C_2+2C_3+C_4\Big)=6,
$$
so that $m_6=2$. This gives $D_{P_{\{y\},\{z\},\{t\}}}=1$. Hence, we have $[\mathsf{f}^{-1}(\infty)]=5=\frac{(-K_X)^3}{2}+2$.

Suppose that $X$ is a Fano threefold \textnumero 2.3.
Recall that $f_4=x^3y+y(y+z)(xz+xt-t^2)$ and $g_4=z(y+z)(xz+xt-t^2)$.
In this case, the~set $\Sigma$ consists of the~points  $P_{\{x\},\{z\},\{t\}}$, $P_{\{x\},\{y\},\{z\}}$,  $P_{\{x\},\{t\},\{y,z\}}$,
$r=5$, $C_1=L_{\{x\},\{t\}}$, $C_2=L_{\{y\},\{z\}}$, $C_3=L_{\{x\},\{y,z\}}$,
the curve $C_4$ is given by $z=x^3+xyt-yt^2=0$, and $C_5$ is the~conic $y=xz+xt-t^2=0$.
Then $M_{1}=1$,  $M_{2}=2$, $M_{3}=1$, $M_{4}=1$, $M_{5}=1$,
$m_{1}=6$, $m_{2}=2$, $m_{3}=3$, $m_{4}=1$ and $m_{5}=1$, so that
$$
\big[\mathsf{f}^{-1}(\infty)\big]=[S]+1+D_{P_{\{x\},\{z\},\{t\}}}+D_{P_{\{x\},\{y\},\{z\}}}+D_{P_{\{x\},\{t\},\{y,z\}}}=4+D_{P_{\{x\},\{z\},\{t\}}}
$$
by \eqref{equation:defect} and \cite[Lemma~1.12.1]{CP18}.
Arguing as in the~case \textnumero 1.11, we~get $D_{P_{\{x\},\{z\},\{t\}}}=2$,
so that $[\mathsf{f}^{-1}(\infty)]=6=\frac{(-K_X)^3}{2}+2$.

Finally, we consider the~case when $X$ is a smooth Fano threefold \textnumero 9.1.
In this case, we have $f_4=x^3y(y^2+z^2)(xt-xz-t^2)$ and $g_4=yz(xt-xz-t^2)$.
Then $\Sigma$ consists of the~points $P_{\{x\},\{z\},\{t\}}$ and $P_{\{x\},\{y\},\{z\}}$,
$r=4$,  $C_1=L_{\{x\},\{t\}}$, $C_1=L_{\{y\},\{z\}}$,
the curve $C_3$ is given by $y=xt-xz-t^2=0$, and $C_4$ is given by $z=x^3+yt(x+t)=0$.
Observe that $M_{1}=1$, $M_{2}=2$, $M_{3}=2$, $M_{4}=1$, $m_{1}=6$, $m_{2}=3$, $m_{3}=2$, $m_{4}=1$.
Thus, using \eqref{equation:defect} and \cite[Lemma~1.12.1]{CP18}, we get
$$
\big[\mathsf{f}^{-1}(\infty)\big]=6+D_{P_{\{x\},\{z\},\{t\}}}+D_{P_{\{x\},\{y\},\{z\}}}=6+D_{P_{\{x\},\{z\},\{t\}}}.
$$
Arguing as in the~case \textnumero 1.11, we~get $D_{P_{\{x\},\{z\},\{t\}}}=2$, so that $[\mathsf{f}^{-1}(\infty)]=\frac{(-K_X)^3}{2}+2$.
\end{proof}

\begin{proposition}
\label{proposition:2-1-10-1}
Suppose that $X$ is a Fano threefold \textnumero 2.1 or a Fano threefold \textnumero 10.1.
Then $[\mathsf{f}^{-1}(\infty)]=\frac{(-K_X)^3}{2}+2$.
\end{proposition}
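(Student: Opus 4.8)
The plan is to repeat, for each of the two families, the argument used to prove Proposition~\ref{proposition:1-1-2-1-2-2-2-3-9-1}. First I would invoke \cite{CP18} to obtain the commutative diagram~\eqref{equation:diagram}, choosing the Laurent polynomial $\mathsf{p}$ among the mirror partners listed in \cite{fanosearch} so that the pencil $\mathcal{S}$ of quartics $f_4+\lambda g_4=0$ on $\mathbb{P}^3$ and the birational morphism $\alpha\colon U\to\mathbb{P}^3$ satisfying properties (1)--(3) become available. Everything then reduces to evaluating the right-hand side of the defect formula~\eqref{equation:defect},
$$
\big[\mathsf{f}^{-1}(\infty)\big]=\big[S\big]+\sum_{i=1}^{r}\delta_i+\sum_{P\in\Sigma}D_P,
$$
and checking that the total equals $\frac{(-K_X)^3}{2}+2$, which is $4$ for the family \textnumero 2.1 (where $(-K_X)^3=4$) and $5$ for the family \textnumero 10.1 (where $(-K_X)^3=6$).

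With the diagram in hand, I would carry out the routine bookkeeping exactly as in the cases \textnumero 1.11, \textnumero 2.2, \textnumero 2.3 and \textnumero 9.1 treated above. That is, I would write down the explicit quartics $f_4$ and $g_4$ for each family, read off the surface $S=\{g_4=0\}$ and its number of components $[S]$, list the finite set $\Sigma$ of indeterminacy points of $\alpha^{-1}$ together with the base curves $C_1,\ldots,C_r$ of $\mathcal{S}$, and compute the multiplicities $M_i=\mathrm{mult}_{\widehat{C}_i}(\widehat{D})$ and the coefficients $m_i$ in $\widehat{D}_0\cdot\widehat{D}_\infty=\sum m_i\widehat{C}_i$. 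The numbers $\delta_i$ are then fixed by their definition, and at every point $P\in\Sigma$ whose local quadratic form for $S$ has rank at least $2$ the defect vanishes by \cite[Lemma~1.12.1]{CP18}. This isolates a small number of genuinely singular points at which the entire computation concentrates.

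The essential new difficulty---and the reason these two families are separated off from Proposition~\ref{proposition:1-1-2-1-2-2-2-3-9-1}---is that here $\mathsf{f}^{-1}(\infty)$ is \emph{not} a divisor with simple normal crossing singularities; equivalently, at the remaining bad points the quartic $S$ degenerates so badly that the rank of its quadratic form drops below $2$ and \cite[Lemma~1.12.1]{CP18} no longer applies. At such a point $P$ I would resolve $\alpha$ by an explicit sequence of blow-ups $U_1\to\mathbb{P}^3$, $U_2\to U_1,\ldots$, rewriting the pencil in a convenient affine chart after each step as was done for \textnumero 2.2, identifying the unique base curve created in each new exceptional divisor, recording its multiplicity, and counting into $A_P$ the exceptional divisors that enter $\widehat{D}$ with positive coefficient. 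The multiplicity $m$ of each newly created base curve I would pin down from a multiplicity balance at $P$ of the type used in the \textnumero 1.11 computation,
$$
\mathrm{mult}_{P}(\mathcal{S})=\sum_{i}m_i\,\mathrm{mult}_{P}(C_i),
$$
and then assemble $D_P=A_P+\sum\delta_i$. I expect this iterated local analysis to be the principal obstacle: one must rewrite the pencil correctly in successive charts, match every exceptional base curve to its multiplicity, and ensure that no contribution is either omitted or double-counted. Since the number of irreducible components of $\mathsf{f}^{-1}(\infty)$ is unchanged by the flops used afterwards to make the fiber simple normal crossing, the value produced by \eqref{equation:defect} is already the final answer, and summing all contributions should give $4$ and $5$ respectively.
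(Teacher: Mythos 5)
Your plan rests on a step that is not available for these two families: invoking \cite{CP18} to realize the pencil $\{\mathsf{p}=\lambda\}$ as a pencil of \emph{quartics} $f_4+\lambda g_4=0$ in $\mathbb{P}^3$, i.e.\ diagram \eqref{equation:diagram}. That is precisely what \cite{CP18} does \emph{not} provide for the families \textnumero 2.1 and \textnumero 10.1, and this --- not the failure of simple normal crossings of $\mathsf{f}^{-1}(\infty)$ --- is why the paper splits them off into a separate proposition. For these two families \cite{CP18} compactifies the pencil inside $\mathbb{P}^1\times\mathbb{P}^2$, as a pencil of bi-degree $(2,3)$ surfaces $f_{2,3}+\lambda g_{2,3}=0$ (the anticanonical class of $\mathbb{P}^1\times\mathbb{P}^2$); this is diagram \eqref{equation:diagram-P1-P2} in the paper. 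The relevant Laurent polynomials (both threefolds carry degree-one del Pezzo fibration structure) do not clear denominators to quartic pencils, so your bookkeeping has no $\mathbb{P}^3$ model to operate on. The paper's actual proof re-runs the machinery of Proposition~\ref{proposition:1-1-2-1-2-2-2-3-9-1} --- the formula \eqref{equation:defect} and \cite[Lemma~1.12.1]{CP18} --- but with $\mathbb{P}^3$ replaced by $\mathbb{P}^1\times\mathbb{P}^2$ and with $S$ the surface $g_{2,3}=0$.

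Your anticipated hard part also does not materialize, which is a symptom of the misdiagnosis above. In the correct setup these two cases are \emph{easier}, not harder, than those of Proposition~\ref{proposition:1-1-2-1-2-2-2-3-9-1}: for \textnumero 2.1 one has $[S]=3$, base curves $C_1,\dots,C_4$ with $M_1=2$, $m_1=2$, $M_2=M_3=M_4=1$, and the unique point of $\Sigma$ has vanishing defect by \cite[Lemma~1.12.1]{CP18}, so \eqref{equation:defect} gives $[\mathsf{f}^{-1}(\infty)]=3+1=4$; for \textnumero 10.1 one has $\Sigma=\varnothing$, $[S]=3$, $M_1=M_2=2$, $m_1=m_2=2$, $M_3=1$, giving $[\mathsf{f}^{-1}(\infty)]=3+1+1=5$. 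No iterated blow-ups, no multiplicity balances at deeply degenerate points, and no rank-one quadratic forms need to be handled. The non-SNC behaviour of $\mathsf{f}^{-1}(\infty)$ that you single out is a real feature of these two families, but it is repaired by flops after the component count and plays no role in the proof of this proposition.
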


\begin{proof}
It follows from \cite[\S~2.1]{CP18} that the~following commutative diagram exists:
\begin{equation}
\label{equation:diagram-P1-P2}
\xymatrix{
V\ar@{->}[drr]_{\mathsf{g}}\ar@{->}[rr]^{\pi}&&\mathbb{P}^2\times\mathbb{P}^1\ar@{-->}[d]^{\phi}&\mathbb{C}^3\ar@{_{(}->}[l]\ar@{->}[d]^{\mathsf{q}}\ar@{-->}[rr]^{\gamma}&&\mathbb{C}^\ast\times\mathbb{C}^\ast\times\mathbb{C}^\ast\ar@{^{(}->}[r]\ar@{->}[d]_{\mathsf{p}}&Y\ar@{->}[d]^{\mathsf{w}}\ar@{^{(}->}[r]&Z\ar@{->}[d]^{\mathsf{f}}\\
&&\mathbb{P}^1&\mathbb{C}^1\ar@{_{(}->}[l]\ar@{=}[rr]&&\mathbb{C}^1\ar@{=}[r]&\mathbb{C}^1\ar@{^{(}->}[r]&\mathbb{P}^1}
\end{equation}
where $\mathsf{q}$ is a surjective morphism,
$\gamma$ is a birational map that is described in \cite[\S~2.1]{CP18},
$\pi$ is a birational morphism, $V$ is a smooth threefold,
the map $\mathsf{g}$ is a surjective morphism such that
$-K_{V}\sim\mathsf{g}^{-1}(\infty)$, and $\phi$ is a rational map that is given by the~pencil $\mathcal{S}$ given by
$$
f_{2,3}(x,y,a,b,c)+\lambda g_{2,3}(x,y,a,b,c)=0,
$$
where $([x:y],[a:b:c])$ is a point in $\mathbb{P}^1\times\mathbb{P}^2$,
both $f_{2,3}$ and $g_{2,3}$ are bi-homogeneous polynomials of bi-degree $(2,3)$,
and $\lambda\in\mathbb{C}\cup\{\infty\}$.
The diagram \eqref{equation:diagram-P1-P2} is similar to~\eqref{equation:diagram},
so that we will follow the~proof of Proposition~\ref{proposition:1-1-2-1-2-2-2-3-9-1} and use its notation.
The only difference is that $\mathbb{P}^3$ is now replaced by $\mathbb{P}^1\times\mathbb{P}^2$,
and $S$ is the~surface given by $g_{2,3}(x,y,a,b,c)=0$.

Suppose that $X$ is a Fano threefold \textnumero 2.1.
Then
\begin{align*}
f_{2,3}&=x(x+y)c^3-y^2(abc-b^2c-a^3),\\
g_{2,3}&=y(x+y)(abc-b^2c-a^3).
\end{align*}
Then $\Sigma$ consists of the~point $P_{\{y\},\{a\},\{c\}}$,
and the~base locus of the~pencil $\mathcal{S}$ consists of the~curve $C_1$ given by $x+y=abc-b^2c-a^3=0$,
the curve $C_2$ given by $x=abc-b^2c-a^3=0$,
the curve $C_3$  given by $y=c=0$, and the~curve $C_4$  given by $a=c=0$.
Hence, we have
$$
\big[\mathsf{f}^{-1}(\infty)\big]=4+D_{P_{\{y\},\{a\},\{c\}}}=4=\frac{(-K_X)^3}{2}+2
$$
by \eqref{equation:defect} and \cite[Lemma~1.12.1]{CP18}, since $M_1=2$, $M_2=1$, $M_3=1$, $M_4=1$, and $m_1=2$.

Suppose that $X$ is a Fano threefold~\textnumero~10.1.
Then
\begin{align*}
f_{2,3}&=xyc^3+(x^2+y^2)(abc-b^2c-a^3),\\
g_{2,3}&=xy(abc-b^2c-a^3).
\end{align*}
In this case, we have $\Sigma=\varnothing$, and the~base locus of the~pencil $\mathcal{S}$ consists of the~curve
$C_1$ given by $x=abc-b^2c-a^3=0$, the~curve $C_2$ given by $y=abc-b^2c-a^3=0$,
and the~curve $C_3$ given by $a=c=0$.
Moreover, one has $M_1=2$, $M_2=2$, $M_3=1$, $m_1=2$ and $m_2=2$.
Hence, using \eqref{equation:defect}, we get
$$
[\mathsf{f}^{-1}(\infty)]=5=\frac{(-K_X)^3}{2}+2
$$
as claimed.
\end{proof}

\section{Fano complete intersections in projective spaces}
\label{section:complete-intersections}

Let $X$ be a Fano complete intersection in $\mathbb{P}^N$ of hypersurfaces of degrees $d_1,\ldots, d_k$,
let~$i_X$ be its Fano index, and let $\mathsf{p}$ be the~Laurent polynomial
$$
\frac{\prod_{i=1}^k(x_{i,1}+\ldots+x_{i,d_i-1}+1)^{d_i}}{\prod_{i=1}^k \prod_{j=1}^{d_i-1} x_{i,j}\prod_{j=1}^{i_X-1} y_j}+y_1+\ldots+y_{i_X-1} \in \mathbb{C}\big[x_{i,j}^{\pm 1}, y_s^{\pm 1}\big],
$$
which we consider as a regular function on the~torus $(\mathbb{C}^*)^n$, where $n=\mathrm{dim}(X)$.
Let~$\Delta$ be the~Newton polytope of $\mathsf{p}$ in $\mathcal{N}=\mathbb{Z}^n$,
let $T_\Delta$ be the~toric Fano variety whose fan polytope (convex hull of generators of rays of the~fan of $T_\Delta$) is $\Delta$.
In other words, cones of the~fan that defines $T_\Delta$ are cones of faces of $\Delta$.
Let
$$
\nabla=\Big\{x\ \big|\ \langle x,y\rangle \geqslant -1 \mbox{ for all } y\in \Delta\Big\}\subset \mathcal M_\mathbb{R}=\mathcal N^\vee\otimes \mathbb{R}
$$
be the~dual to $\Delta$ polytope.
Then $\nabla$ and $\Delta$ are \emph{reflexive} (see \cite{P16}).
Let $M$ be the~matrix
$$
\left(%
\begin{array}{rrrr|r|rrrr|rrr}
  i_X & 0 & \ldots & 0 & \ldots & 0 & 0 & \ldots & 0 & -1 & \ldots & -1 \\
  0 & i_X & \ldots & 0 & \ldots & 0 & 0 & \ldots & 0 & -1 & \ldots & -1 \\
  \ldots & \ldots & \ldots & \ldots & \ldots & \ldots & \ldots & \ldots & \ldots & \ldots & \ldots & \ldots\\
  0 & 0 & \ldots & i_X & \ldots & 0 & 0 & \ldots & 0 & -1 & \ldots & -1 \\
  -i_X & -i_X & \ldots & -i_X & \ldots & 0 & 0 & \ldots & 0 & -1 & \ldots & -1 \\
  \hline
  \ldots & \ldots & \ldots & \ldots & \ldots & \ldots & \ldots & \ldots & \ldots & \ldots & \ldots & \ldots\\
  \hline
  0 & 0 & \ldots & 0 & \ldots & i_X & 0 & \ldots & 0 & -1 & \ldots & -1 \\
  0 & 0 & \ldots & 0 & \ldots & 0 & i_X & \ldots & 0 & -1 & \ldots & -1 \\
  \ldots & \ldots & \ldots & \ldots & \ldots & \ldots & \ldots & \ldots & \ldots & \ldots & \ldots & \ldots\\
  0 & 0 & \ldots & 0 & \ldots & 0 & 0 & \ldots & i_X & -1 & \ldots & -1 \\
  0 & 0 & \ldots & 0 & \ldots & -i_X & -i_X & \ldots & -i_X& -1 & \ldots & -1 \\
\hline
  0 & 0 & \ldots & 0 & \ldots & 0 & 0 & \ldots & 0 & i_X-1 & \ldots & -1 \\
  \ldots & \ldots & \ldots & \ldots & \ldots & \ldots & \ldots & \ldots & \ldots & \ldots & \ldots & \ldots \\
  0 & 0 & \ldots & 0 & \ldots & 0 & 0 & \ldots & 0 & -1 & \ldots & i_X-1 \\
\end{array}%
\right),
$$
which is formed from $k$ blocks of sizes $(d_i-1)\times d_i$ and one last block of size $(i_X-1)\times(i_X-1)$.
Then it follows from \cite{P16} that the~vertices of $\nabla$  are the~rows of the~matrix $M$.
Note that there is a mistake in the~size of the~last block in~\cite{P16}.

It has been shown in \cite{ILP13,P16} that $\mathsf{p}$ is a toric Landau--Ginzburg~model of the~variety $X$
that admits a log Calabi--Yau compactification $(Z,\mathsf{f})$.

\begin{theorem}[{cf.~\cite[Problem 11]{P16}}]
\label{theorem:wci-components}
The number of irreducible components of the~fiber $\mathsf{f}^{-1}(\infty)$ equals $h^0(\mathcal{O}_X(-K_{X}))-1$.
\end{theorem}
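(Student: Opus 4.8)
The plan is to prove the equivalent statement
$$
\#\big(\nabla\cap\mathcal{M}\big)=h^0\big(\mathcal{O}_X(-K_X)\big),
$$
from which the theorem follows at once: since $\nabla$ is reflexive, its only interior lattice point is the origin, so $r_X=\#(\partial\nabla\cap\mathcal{M})=\#(\nabla\cap\mathcal{M})-1$. For the right-hand side I would use that $-K_X=\mathcal{O}_X(i_X)$ and, by Kodaira vanishing, $h^0(\mathcal{O}_X(-K_X))=\chi(\mathcal{O}_X(i_X))$; feeding the Koszul resolution of $\mathcal{O}_X$ into this (and using that a complete intersection is projectively normal) shows that $h^0(\mathcal{O}_X(-K_X))$ is the coefficient of $t^{i_X}$ in the Hilbert series $\prod_{i=1}^k(1-t^{d_i})/(1-t)^{N+1}$. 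Using $N+1=i_X+\sum_i d_i$, I would record this as $[t^{i_X}]$ of $\frac{1}{(1-t)^{i_X-1}}\cdot\frac{\prod_i(1-t^{d_i})}{(1-t)^{\sum_i d_i+1}}$, anticipating the two factors produced on the toric side.

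The combinatorial heart is to count $\#(\nabla\cap\mathcal{M})$ directly from the vertices of $\nabla$, which are the rows of $M$. I would split $\mathcal{M}_{\mathbb{R}}=\bigoplus_{i=1}^k\mathbb{R}^{d_i-1}\oplus\mathbb{R}^{i_X-1}$ into the ``$u$-coordinates'' coming from the $k$ degree blocks and the ``$w$-coordinates'' of the last block. Reading off $M$, every vertex has $w$-part equal either to $q_0=-(1,\dots,1)$ (for the degree-block rows) or to $q_s=i_Xe_s-(1,\dots,1)$ (for the $s$-th row of the last block), and $q_0,\dots,q_{i_X-1}$ are exactly the vertices of the reflexive simplex $\nabla_0=\mathrm{conv}(q_0,\dots,q_{i_X-1})$, i.e. the centred anticanonical polytope of $\mathbb{P}^{i_X-1}$. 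Thus the projection of $\nabla$ to the $w$-coordinates is $\nabla_0$, and I would verify that the slice of $\nabla$ over a point $w'\in\nabla_0$ with $q_0$-barycentric coordinate $\lambda_0$ is the dilate $\lambda_0 i_X\cdot\bigoplus_i\Sigma_{d_i-1}$ of the free sum of the fan simplices $\Sigma_m=\mathrm{conv}(e_1,\dots,e_m,-e_1-\dots-e_m)$: the only vertices with nonzero $u$-part sit over $q_0$, and there they span precisely $i_X\bigoplus_i\Sigma_{d_i-1}$. A short computation gives $\lambda_0 i_X=1-\sum_s w'_s=:c\ge 0$, which is an integer at lattice points, so each integral slice is the integral dilate $cP$ with $P=\bigoplus_i\Sigma_{d_i-1}$.

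Summing over integral slices then gives $\#(\nabla\cap\mathcal{M})=\sum_{c\ge 0}N(c)\,L(c)$, where $N(c)=\#\{w'\in\mathbb{Z}^{i_X-1}:w'_s\ge -1,\ \sum_s w'_s=1-c\}=\binom{2i_X-c-2}{i_X-2}=[t^{i_X-c}](1-t)^{-(i_X-1)}$ counts the lattice points of the corresponding slice of $\nabla_0$, and $L(c)=\#(cP\cap\mathbb{Z}^{\sum(d_i-1)})$. To evaluate $L(c)$ I would use the Ehrhart series of the fan simplex, $\mathrm{Ehr}_{\Sigma_m}(t)=\frac{1+t+\dots+t^m}{(1-t)^{m+1}}$ (its $h^*$-polynomial is $1+t+\dots+t^m$, checked in low dimension and following from the quotient description of $\Sigma_m$ as the image of the standard simplex), together with the gauge decomposition of the free sum: a lattice point of $cP$ is a tuple $(u^{(i)})_i$ with $\sum_i\|u^{(i)}\|_{\Sigma_{d_i-1}}\le c$. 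Equivalently, the $h^*$-polynomials of the reflexive factors multiply, giving $\sum_c L(c)t^c=\frac{\prod_i(1-t^{d_i})}{(1-t)^{\sum_i d_i+1}}$. Convolving the two generating functions yields $\#(\nabla\cap\mathcal{M})=[t^{i_X}]\frac{\prod_i(1-t^{d_i})}{(1-t)^{N+1}}=h^0(\mathcal{O}_X(-K_X))$, which is the desired identity.

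The main obstacle is the second step: extracting from the explicit matrix $M$ the Cayley-type fibered structure of $\nabla$ over $\nabla_0$, and verifying that its integral slices are genuinely the dilates $cP$ of the free sum $P=\bigoplus_i\Sigma_{d_i-1}$ (so that the factors are reflexive and their $h^*$-polynomials multiply). Once this geometric bookkeeping is in place, the remaining ingredients—the Ehrhart series $h^*_{\Sigma_m}=1+\dots+t^m$ and the convolution of $N(c)$ with $L(c)$—are routine, and the match with the Koszul/Hilbert-series computation of $h^0(\mathcal{O}_X(-K_X))$ is then immediate.
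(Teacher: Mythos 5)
Your proposal is correct, but it takes a genuinely different route from the paper. The paper shifts $\nabla$ by $(0,\ldots,0,1,\ldots,1)$ and builds a direct bijection: a prime-by-prime divisibility argument shows every lattice point of the shifted polytope equals $\frac{1}{i_X}\bigl(a\mathbf{0}+\sum a_{i,j}v_{i,j}\bigr)$ with $a+\sum a_{i,j}=i_X$, and the lattice points are then identified with degree-$i_X$ monomials in $N+1$ variables modulo those divisible by the block monomials coming from the relations $v_{i,1}+\ldots+v_{i,d_i}=\mathbf{0}$, i.e.\ with a monomial basis of $H^0(\mathcal{O}_X(-K_X))$. You instead extract from $M$ the Cayley-type fibration of $\nabla$ over the dual simplex $\nabla_0$ of $\mathbb{P}^{i_X-1}$ (your verification, including $\lambda_0 i_X=1-\sum_s w'_s$, is correct, and uniqueness of barycentric coordinates on the simplex $\nabla_0$ is what makes the slice description clean), and turn the count into an Ehrhart convolution of $N(c)$ with $L(c)$, where the Ehrhart series $\prod_i(1-t^{d_i})/(1-t)^{\sum d_i+1}$ of the free sum $P=\bigoplus_i\Sigma_{d_i-1}$ follows from Braun's $h^*$-multiplicativity theorem for free sums of reflexive polytopes, or directly from your gauge argument since $\sum_u t^{\|u\|}=(1-t)\mathrm{Ehr}(t)$ and gauges add under free sums. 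What each approach buys: the paper's bijection is elementary and closer to the Batyrev picture (lattice points of $\nabla$ correspond to anticanonical monomial sections), but hinges on the delicate divisibility lemma and an only-asserted uniqueness claim; your route replaces that arithmetic by standard Ehrhart theory, so once the fibered structure is established every remaining step is a routine generating-function identity — at the cost of invoking (or reproving) free-sum multiplicativity. Both proofs consume the same algebraic input, the Hilbert series of a complete intersection; on your side Kodaira vanishing is dispensable, since projective normality plus the Koszul resolution already give $h^0(\mathcal{O}_X(-K_X))=[t^{i_X}]\prod_i(1-t^{d_i})/(1-t)^{N+1}$.
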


\begin{proof}
By~\cite[Theorem 2.2]{ILP13}, the~toric variety $T_\Delta$ is a flat degeneration of $X$.
Since this degeneration is flat, one has
$$
\chi\left((\mathcal O_X(-K_X)\right)=\chi\left((\mathcal O_{T_\Delta}(-K_{T_\Delta})\right).
$$
On the~other hand, $T_\Delta$ is Fano by construction. Moreover, the~singularities of $T_\Delta$ are Kawamata log terminal by \cite[Proposition~3.7]{Ko95}.
Thus, by Kodaira vanishing (see e.g.~\mbox{\cite[Theorem~2.70]{KM98}}),
one has $h^i(\mathcal{O}_{T_\Delta}(-K_{T_\Delta}))=0$ for $i>0$. Similarly, applying Kodaira vanishing on a smooth Fano variety $X$, we
see that $h^i(\mathcal{O}_X(-K_X))=0$ for $i>0$. Therefore, we obtain
$$
h^0(\mathcal{O}_X(-K_X))=h^0(\mathcal{O}_{T_\Delta}(-K_{T_\Delta})).
$$
It is well known (see, for instance,~\cite[\S 6.3]{Da78}) the~anticanonical linear system of $T_\Delta$ can be described as the~linear system
of Laurent polynomials supported on
its dual polytope $\nabla$. Since $\nabla$ is reflexive, the~dimension $h^0(-K_{T_\Delta})-1$
of this linear system equals to the~number of integral points
on the~boundary of $\nabla$.
By~\cite[Theorem~1]{P16}, the~log Calabi--Yau compactification $(Z,\mathsf{f})$ is constructed via
a crepant toric resolution of $T_\Delta$ and a sequence of blow ups in smooth centers such that
exceptional divisors of these blow ups do not lie over $\infty$.
In particular, the~number of irreducible components of the~fiber $\mathsf{f}^{-1}(\infty)$
is equal to the~number of boundary divisors of the~crepant resolution of $T_\Delta$,
which is equal to the~number of integral points in the~boundary of $\nabla$,
since $\Delta$ is reflexive. This gives the~assertion of the~theorem.
\end{proof}

\begin{remark}
Theorem~\ref{theorem:wci-components} seems to hold in a much more general case of smooth Fano weighted complete intersections.
The problem is that the~Newton polytope $\Delta$ in this case is usually is not reflexive, so that $\nabla$
is not integral. This means that the~lattice points count in $\nabla$ is not enough for the~claim, because
the log Calabi--Yau compactification procedure (construction of the~diagram~\eqref{equation:KKP}) from~\cite{Prz17}
does not work.
However at least in some cases this procedure can be modified: one can construct the~compactification in the~face fan of the~(non-integral) polytope $\nabla$ and blow down some of the~components of the~fiber over infinity.
It turns out that the~blown down components correspond exactly to the~non-integral vertices of $\nabla$,
so that the~arguments of Theorem~\ref{theorem:wci-components} work in these cases.
For details see~\cite{Prz21}.
\end{remark}

\section{Toric Fano varieties}
\label{section:toric}

Let $X$ be a smooth toric Fano variety of dimension $n$, let $\Delta$ be its fan polytope, and let $\nabla$ be the~dual (integral) polytope,
and let $X^\vee$ be the~dual toric variety, i.e. the~Fano variety whose fan polytope is $\nabla$.
Note that $X^\vee$ can be singular. Suppose that $X^\vee$ admits a crepant (toric) resolution $\widetilde{X}^\vee\to X^\vee$.
Let $\mathsf{p}$ be the~Laurent polynomial given by the~sum of monomials corresponding to vertices of $\Delta$.
Then it follows from \cite{Prz17} that $\mathsf{p}$ defines
a toric Landau--Ginzburg~model of the~Fano variety $X$ that admits a log Calabi--Yau compactification
$(Z,\mathsf{f})$ such that the~exists the~following commutative diagram:
$$
\xymatrix{
\widetilde{X}^\vee\ar@{-->}[d]_{\phi}&&(\mathbb{C}^*)^n\ar@{_{(}->}[ll]\ar@{^{(}->}[rr]\ar@{->}[d]_{\mathsf{p}}&&Z\ar@{->}[d]^{\mathsf{f}}\\
\mathbb{P}^1&&\mathbb{C}\ar@{_{(}->}[ll]\ar@{^{(}->}[rr]&&\mathbb{P}^1}
$$
where $\phi$ is a rational map given by an anticanonical pencil $\mathcal{S}$ on the~(weak Fano) variety~$\widetilde{X}^\vee$.
Note that the~toric boundary divisor $\widetilde{X}^\vee\setminus (\mathbb{C}^*)^n$ is contained in $\mathcal{S}$.

\begin{proposition}
\label{proposition:toric}
The fiber $\mathsf{f}^{-1}(\infty)$ consists of $h^0(\mathcal{O}_X(-K_{X}))-1$ irreducible components.
\end{proposition}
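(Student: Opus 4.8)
The plan is to reduce the statement to a single lattice-point count on the boundary of $\nabla$, and then to match this count on one side with the number of components of $\mathsf{f}^{-1}(\infty)$ and on the other side with $h^0(\mathcal{O}_X(-K_X))-1$.

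First I would identify $\big[\mathsf{f}^{-1}(\infty)\big]$ with the number of irreducible components of the toric boundary divisor $B=\widetilde{X}^\vee\setminus(\mathbb{C}^*)^n$. Since $\widetilde{X}^\vee$ is weak Fano with $-K_{\widetilde{X}^\vee}\sim B$ and $B$ is the member of the anticanonical pencil $\mathcal{S}$ corresponding to $\lambda=\infty$, the divisor $B$ is not contained in the base locus of $\mathcal{S}$, so the strict transform of each of its components on $Z$ is an irreducible component of the fiber over $\infty$. Passing from $\widetilde{X}^\vee$ to $Z$ only resolves the indeterminacy of $\phi$, that is, blows up $\mathrm{Bs}(\mathcal{S})$, which is supported on the intersection of $B$ with a general member of $\mathcal{S}$ and is of codimension two. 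I would argue --- leaning here on the explicit construction in \cite{Prz17} --- that the resulting exceptional divisors dominate $\mathbb{P}^1$, hence are \emph{horizontal} and contribute nothing to $\mathsf{f}^{-1}(\infty)$, so that $\big[\mathsf{f}^{-1}(\infty)\big]=[B]$, the number of torus-invariant prime divisors of $\widetilde{X}^\vee$.

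Next I would count these divisors combinatorially. They correspond bijectively to the rays of the fan $\widetilde{\Sigma}$ of $\widetilde{X}^\vee$. Because $\widetilde{X}^\vee\to X^\vee$ is crepant, every primitive ray generator of $\widetilde{\Sigma}$ lies on $\partial\nabla$; because $\widetilde{X}^\vee$ is smooth, every maximal cone of $\widetilde{\Sigma}$ is unimodular, so the induced triangulation of $\partial\nabla$ consists of unimodular simplices, each of which contains no lattice point other than its own vertices. Consequently every lattice point of $\partial\nabla$ must be a ray generator, and therefore $[B]=\#(\partial\nabla\cap\mathcal{M})$.

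Finally I would invoke reflexivity. Since $X$ is a Gorenstein toric Fano variety with fan polytope $\Delta$, its anticanonical sections are indexed by the lattice points of the polar dual polytope, so $h^0(\mathcal{O}_X(-K_X))=\#(\nabla\cap\mathcal{M})$; as $\nabla$ is reflexive its unique interior lattice point is the origin, whence $\#(\nabla\cap\mathcal{M})-1=\#(\partial\nabla\cap\mathcal{M})$. Combining the three steps yields $\big[\mathsf{f}^{-1}(\infty)\big]=\#(\partial\nabla\cap\mathcal{M})=h^0(\mathcal{O}_X(-K_X))-1$. The hard part will be the first step: one must be certain that resolving the base locus of the anticanonical pencil introduces no \emph{vertical} exceptional divisor over $\infty$ --- in contrast to the threefold cases of Section~\ref{section:Fano-threefolds}, where genuine defect contributions appear --- so that the fiber over infinity really is the strict transform of the toric boundary and the clean combinatorial count applies.
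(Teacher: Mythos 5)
Your proposal is correct and follows essentially the same route as the paper: identify $\mathsf{f}^{-1}(\infty)$ with the proper transform of the toric boundary of $\widetilde{X}^\vee$, count its components as the lattice points of $\partial\nabla$ (crepancy plus smoothness forcing every boundary lattice point to be a ray generator), and match this with $h^0(\mathcal{O}_X(-K_X))-1$ via reflexivity of $\nabla$. The one step you defer to \cite{Prz17} --- that resolving the base locus creates no vertical divisor over $\infty$ --- is exactly what the paper justifies directly, using smoothness of $X$ to see that each boundary divisor of $\widetilde{X}^\vee$ is a projective space on which the base locus of $\mathcal{S}$ restricts to a hyperplane avoiding the torus-invariant points, so that $Z$ is obtained by consecutive blow-ups of base-locus components; conversely, your unimodularity argument for the lattice-point count is spelled out more fully than in the paper.
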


\begin{proof}
Since $X$ is smooth, every irreducible toric boundary divisor of $\widetilde{X}^\vee$
is isomorphic to a projective space, and the~restriction of base locus of the~pencil $\mathcal{S}$ on this divisor is a hyperplane
that does not contain torus invariant points.
Thus, to obtain $Z$, we can blow up (consecutively) irreducible components of the~base locus of the~pencil $\mathcal{S}$,
which implies that $\mathsf{f}^{-1}(\infty)$ is the~proper transform of the~the toric divisor $\widetilde{X}^\vee\setminus (\mathbb{C}^*)^n$.
In particular, the~number of irreducible components of the~fiber $\mathsf{f}^{-1}(\infty)$  equals the~number of integral points of $\nabla$ minus one.
This number is exactly $h^0(\mathcal{O}_X(-K_{X}))-1$, which can be described as a linear system of Laurent polynomials supported by $\nabla$.
\end{proof}

\end{document}